\makeatletter \@namedef{subjclassname@2010}{
  \textup{2010} Mathematics Subject Classification}
\newtheorem{thm}{Theorem}[section]
\newtheorem{cor}[thm]{Corollary}
\newtheorem{lem}[thm]{Lemma}
\newtheorem{pro}[thm]{Proposition}
\theoremstyle{remark}
\newtheorem*{rema}{Remark}
\theoremstyle{definition}
\newcommand{\REAL}{\text{\rm{Re}}}
\newcommand{\Ima}{\text{\rm{Im}}}
\newcommand{\Z}{\mathbb{Z}}
\newcommand{\C}{\mathbb{C}}
\begin{document}

\title[Absolute Value of The Product and the Sum of Operators]{On The Absolute Value of The Product and the Sum of Linear Operators}
\author[M. H. Mortad]{Mohammed Hichem Mortad}

\dedicatory{}
\thanks{}
\date{}
\keywords{Absolute Value. Triangle Inequality. Normal, Hyponormal,
Self-adjoint and Positive Operators. Commutativity. Fuglede Theorem.
}

\subjclass[2010]{Primary 47A63, Secondary 47A62, 47B15, 47B20.}

\address{Department of
Mathematics, University of Oran 1, Ahmed Ben Bella, B.P. 1524, El
Menouar, Oran 31000, Algeria.\newline {\bf Mailing address}:
\newline Pr Mohammed Hichem Mortad \newline BP 7085 Seddikia Oran
\newline 31013 \newline Algeria}

\email{mhmortad@gmail.com, mortad@univ-oran.dz.}

\begin{abstract}Let $A,B\in B(H)$. In the present paper, we establish simple and interesting facts on when we
have $|A||B|=|B||A|$, $|AB|=|A||B|$, $|A\pm B|\leq |A|+|B|$,
$||A|-|B||\leq |A\pm B|$ and $\||A|-|B|\|\leq \|A\pm B\|$, where
$|\cdot|$ denotes the absolute value (or modulus) of an operator.
The results give some other interesting consequences.
\end{abstract}

\maketitle

\section{Introduction}

Let $H$ be a complex Hilbert space and let $A,B\in B(H)$. We say
that $A$ is positive, and we write $A\geq0$, if $<Ax,x>\geq 0$ for
all $x\in H$. Since $H$ is a complex Hilbert space, a positive
operator is clearly self-adjoint. We say that $A\geq B$ if they are
both self-adjoint and $A-B\geq 0$. Recall also that if $A\geq 0$,
then there is a unique positive operator $B$ such that $B^2=A$. We
call it the (positive) square root of $A$ and we denote it by
$\sqrt{A}$ (or $A^{\frac{1}{2}}$). Next, we gather basic results on
square roots of sums and products.

\begin{lem}\label{sqrt A+B Lemma}
Let $A,B\in B(H)$ be such that $AB=BA$ and $A,B\geq 0$. Then
\begin{itemize}
  \item $AB\geq 0$.
  \item $\sqrt{AB}=\sqrt A\sqrt B$.
  \item $\sqrt{A+B}\leq \sqrt A+\sqrt B.$
\end{itemize}
\end{lem}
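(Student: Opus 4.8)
The plan is to reduce everything to the interplay between $A$, $B$ and their square roots, using the standard fact that $\sqrt A$ lies in the norm-closed subalgebra of $B(H)$ generated by $A$, so that $\sqrt A$ commutes with every operator commuting with $A$. From $AB=BA$ it then follows that $A,B,\sqrt A,\sqrt B$ all commute with one another; in particular $\sqrt A\sqrt B$ is self-adjoint, being a product of two commuting self-adjoint operators, and so is $\sqrt A+\sqrt B$. For the first bullet, since $\sqrt A$ commutes with $B$ and $B\geq 0$,
\[
AB=\sqrt A\,\sqrt A\,B=\sqrt A\,B\,\sqrt A=(\sqrt A)^{*}B\,\sqrt A\geq 0,
\]
using that $X^{*}YX\geq 0$ whenever $Y\geq 0$. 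For the second bullet, apply this very argument to the commuting positive pair $\sqrt A,\sqrt B$ to get $\sqrt A\sqrt B\geq 0$; since in addition $(\sqrt A\sqrt B)^{2}=\sqrt A\,\sqrt A\,\sqrt B\,\sqrt B=AB$ by commutativity, uniqueness of the positive square root forces $\sqrt{AB}=\sqrt A\sqrt B$.

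For the third bullet, expanding and using $\sqrt A\sqrt B=\sqrt B\sqrt A$ gives
\[
(\sqrt A+\sqrt B)^{2}=A+B+2\sqrt A\sqrt B\geq A+B=(\sqrt{A+B})^{2},
\]
the inequality because $\sqrt A\sqrt B\geq 0$ by the previous step. It then remains to pass from this inequality between the squares to the desired $\sqrt A+\sqrt B\geq\sqrt{A+B}$, and I expect this last passage to be the one genuinely delicate point: squaring is not operator monotone, so one cannot simply take square roots of the displayed inequality.

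I see two ways around it. The first invokes the operator monotonicity of $t\mapsto\sqrt t$ on $[0,\infty)$ — which can be read off from the integral representation $\sqrt t=\tfrac1\pi\int_{0}^{\infty}\tfrac{t}{t+s}\,s^{-1/2}\,ds$ together with the order-reversing property of $X\mapsto(X+s)^{-1}$ for $s>0$ — applied to $0\leq A+B\leq(\sqrt A+\sqrt B)^{2}$, which yields $\sqrt{A+B}\leq\sqrt{(\sqrt A+\sqrt B)^{2}}=\sqrt A+\sqrt B$, the last equality holding since $\sqrt A+\sqrt B\geq 0$. The second, and perhaps the cleanest in the present commutative setting, is to note that $\sqrt{A+B}$ and $\sqrt A+\sqrt B$ both commute with $A+B$, hence with each other, so they lie in a commutative $C^{*}$-subalgebra of $B(H)$; transporting the problem to some $C(X)$ via the Gelfand representation, the claim reduces to the pointwise scalar inequality $\sqrt{p+q}\leq\sqrt p+\sqrt q$ for $p,q\geq 0$, which is immediate. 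In fact this Gelfand reduction disposes of all three bullets at once, the statements becoming $pq\geq 0$, $\sqrt{pq}=\sqrt p\sqrt q$ and $\sqrt{p+q}\leq\sqrt p+\sqrt q$ for nonnegative $p,q\in C(X)$.
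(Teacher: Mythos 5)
Your proof is correct; note, however, that the paper does not prove this lemma at all --- it is stated in the introduction as a list of standard background facts, so there is no argument of the author's to compare yours with. Your handling of the first two bullets (commutation of $\sqrt A$ and $\sqrt B$ via the fact that square roots lie in the norm-closed algebra generated by the operator, positivity of $AB=\sqrt A\,B\,\sqrt A$, and uniqueness of the positive square root applied to $(\sqrt A\sqrt B)^2=AB$) is the standard argument and is complete. For the third bullet you did not need to re-derive operator monotonicity of $t\mapsto\sqrt t$ from the integral representation: the paper records exactly this fact as the L\"owner--Heinz inequality (with $\alpha=\tfrac12$) in the introduction, so citing it would have sufficed; your integral-representation argument is a standard proof of that statement and is fine as written. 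Your alternative Gelfand-transform route is also valid, and indeed disposes of all three bullets at once; the one point worth phrasing more carefully is the claim that $\sqrt A+\sqrt B$ and $\sqrt{A+B}$ ``both commute with $A+B$, hence with each other'' --- two operators commuting with a third need not commute with each other in general, and the correct justification is the one you already invoked at the outset, namely that $\sqrt{A+B}$ lies in the norm-closed algebra generated by $A+B$, so every operator commuting with $A+B$ commutes with $\sqrt{A+B}$. With that wording repaired, both of your routes are sound.
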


The unique positive square root of the positive operator $A^*A$ is
commonly known as the absolute value (or modulus) of $A$. We denote
it by $|A|$, that is, $|A|=\sqrt{A^*A}$. Notice that
$\|A\|=\|~|A|~\|$ always holds.

We usually warn students to be careful with this notation as it may
mislead them to think that e.g.
\[|A|=|A^*|,~|A+B|\leq |A|+|B| \text{ or } |AB|=|A||B|\]
would hold. Counterexamples are easily found in the setting of 2 by
2 matrices. Notice that $A$ is normal if and only if $|A|=|A^*|$.

Also, a priori if $A,B$ are arbitrary, then there is no reason why
we should expect $|A||B|=|B||A|$ to hold (for instance, just think
of positive operators). Even when $AB=BA$, the equality
$|A||B|=|B||A|$ need not hold. For example, let
\[A=\left(
      \begin{array}{cc}
        1 & 1 \\
        0 & 1 \\
      \end{array}
    \right)\text{ and } B=\left(
                            \begin{array}{cc}
                              0 & 1 \\
                              0 & 0 \\
                            \end{array}
                          \right).
\]
Then as we can easily verify:
\[AB=\left(
       \begin{array}{cc}
         0 & 1 \\
         0 & 0 \\
       \end{array}
     \right)=BA
\]
whereas $|A||B|\neq |B||A|$.

Observe that we have purposely avoided normal operators in our
counterexample (cf. Proposition \ref{commutativity operators equiv
abs va pro}).

The main aim of this paper is to investigate when relations of the
types
\begin{itemize}
  \item $|A||B|=|B||A|$;
  \item $|AB|=|A||B|$;
  \item $|A+B|\leq |A|+|B|$;
  \item $|~|A|-|B|~|\leq |A+B|$;
  \item $\|~|A|-|B|~\|\leq \|A+B\|$;
\end{itemize}
hold. It turns out that normality and sometimes hyponormality plus
commutativity are sufficient for these relations to hold. This comes
to corroborate the resemblance to complex numbers which is already
known to many. Notice also that commutativity is not unnatural as we
already have it in $(\C,\times)$.

The idea here is to start from scratch, and use as basic results as
possible to make the paper accessible to a wide
 audience. We note that for example, we have wittingly avoided the use of the spectral theorem of normal
 operators. Therefore, most of the results here can be taught at
 elementary courses in Operator Theory.

It is worth noticing that there is a big amount of papers which have
dealt with inequalities involving absolute values and/or norms of
operators. The literature is so rich that we rather refer readers to
books which have gathered most of these results. For example, see
\cite{Bhatia-book-matrices}, \cite{FUR.book} and
\cite{Zhan-inequalities}.

Finally, we assume the reader is familiar with other basic results
on Operator Theory. A well established reference is \cite{Con}. We
do recall two crucial results though.

\begin{thm}(\textbf{L\"{o}wner-Heinz Inequality}, see
\cite{Pdersen-1972} for a simple proof)\label{Lowner-Heinz
inequality theorem} If $A\geq B\geq 0$, then $A^{\alpha}\geq
B^{\alpha}$ for any $\alpha\in [0,1]$.
\end{thm}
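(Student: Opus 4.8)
This is a classical fact (a complete proof is given in \cite{Pdersen-1972}), so I only sketch the route I would take: via the integral representation of the power function. First I would record the elementary real-variable identity
\[
t^{\alpha}=\frac{\sin(\alpha\pi)}{\pi}\int_0^{\infty}\frac{\lambda^{\alpha-1}\,t}{\lambda+t}\,d\lambda\qquad(0<\alpha<1,\ t\ge 0),
\]
which drops out of the substitution $\lambda=tu$ and the standard value $\int_0^{\infty}u^{\alpha-1}(1+u)^{-1}\,du=\pi/\sin(\alpha\pi)$. Then, noting $\|A(\lambda I+A)^{-1}\|\le 1$ and $\|A(\lambda I+A)^{-1}\|\le\|A\|/\lambda$ so that the operator-valued integral converges in norm, I would apply the continuous functional calculus of the positive operator $A$ (already used in the paper, e.g.\ to define $\sqrt{A}$) to obtain
\[
A^{\alpha}=\frac{\sin(\alpha\pi)}{\pi}\int_0^{\infty}\lambda^{\alpha-1}\,A(\lambda I+A)^{-1}\,d\lambda .
\]

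Next I would isolate the two order-theoretic inputs. The first is that inversion reverses order: if $0<S\le T$ then $T^{-1}\le S^{-1}$, since $S\le T$ gives $T^{-1/2}ST^{-1/2}\le I$, hence $\|T^{-1/2}S^{1/2}\|^{2}=\|T^{-1/2}ST^{-1/2}\|\le 1$, hence $\|S^{1/2}T^{-1/2}\|\le 1$, hence $S^{1/2}T^{-1}S^{1/2}=(S^{1/2}T^{-1/2})(S^{1/2}T^{-1/2})^{*}\le I$, i.e.\ $T^{-1}\le S^{-1}$. Applying this with $S=\lambda I+A\le\lambda I+B=T$ (valid for every $\lambda>0$) and using $X(\lambda I+X)^{-1}=I-\lambda(\lambda I+X)^{-1}$, I get the second input: $A(\lambda I+A)^{-1}\le B(\lambda I+B)^{-1}$ for all $\lambda>0$ — i.e.\ $t\mapsto t/(\lambda+t)$ is operator monotone.

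Finally I would integrate this inequality against the positive measure $\frac{\sin(\alpha\pi)}{\pi}\lambda^{\alpha-1}\,d\lambda$ on $(0,\infty)$ (positivity is preserved because, against any fixed vector of $H$, the scalar integrand stays nonnegative for every $\lambda$) and read off $A^{\alpha}\le B^{\alpha}$; the endpoint cases $\alpha\in\{0,1\}$ are trivial. The main obstacle, and the only place where care is really needed, is the rigorous handling of the improper operator-valued integral and its commutation with the functional calculus; all the algebra around it is routine. (If one prefers to avoid integrals altogether — this is the argument in \cite{Pdersen-1972} — one instead shows that the set of admissible exponents $\alpha\in[0,1]$ contains $0$ and $1$, is norm-closed, and is closed under midpoints, the midpoint step being a short computation using $\|X\|^{2}=\|X^{*}X\|$ and $\|XY\|\le\|X\|\,\|Y\|$.)
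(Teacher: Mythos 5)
The paper does not prove this statement at all: it is quoted as a known theorem, with \cite{Pdersen-1972} cited for a proof, and the paper only ever uses it through the functional calculus facts it already assumes. So there is no internal proof to compare with; the relevant comparison is with Pedersen's cited argument. Your sketch via the integral representation $t^{\alpha}=\frac{\sin(\alpha\pi)}{\pi}\int_0^{\infty}\lambda^{\alpha-1}t(\lambda+t)^{-1}\,d\lambda$ is the other standard route and is essentially correct: the order-reversal of inversion, the operator monotonicity of $t\mapsto t(\lambda+t)^{-1}$, the norm bounds guaranteeing convergence, and the passage to the operator identity by functional calculus are all sound, and the endpoint cases are trivial. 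One cosmetic slip: the theorem assumes $A\geq B\geq 0$, while your application takes $\lambda I+A\leq\lambda I+B$ and concludes $A^{\alpha}\leq B^{\alpha}$; you have silently interchanged the roles of $A$ and $B$, which is harmless but should be fixed for consistency with the statement. As for what each approach buys: your integral-representation proof is more conceptual and generalizes (it is the gateway to L\"{o}wner's theory of operator monotone functions), but it requires a careful treatment of the improper operator-valued integral, which you correctly flag as the delicate point; Pedersen's proof --- which is exactly the alternative you mention in your closing parenthesis (the set of good exponents contains $0$ and $1$, is closed, and is midpoint-convex, via $\|X\|^{2}=\|X^{*}X\|$ and submultiplicativity) --- avoids integrals entirely and is better aligned with this paper's stated goal of using only elementary tools. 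Note finally that the paper only ever invokes the case $\alpha=\tfrac12$, for which even shorter ad hoc arguments exist.
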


\begin{rema}
It is known to readers that $A\geq B\geq 0$ implies that $A^2\geq
B^2$ when $AB=BA$.
\end{rema}

Since we will be dealing with sums and products of commuting normal
operators, the use of the celebrated Fuglede-Putnam theorem is
inevitable. \textit{The following lemma will be used below without
further notice.}

\begin{lem}\label{Fugelde equivalences lemma}Let $A,B\in B(H)$ where $A$ is normal. Then, we have
\[AB=BA\Longleftrightarrow A^*B=BA^*\Longleftrightarrow AB^*=B^*A\Longleftrightarrow A^*B^*=B^*A^*.\]
\end{lem}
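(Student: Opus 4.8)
The plan is to reduce all four equivalences to the single classical assertion, call it $(\mathrm{F})$ (Fuglede): \emph{if $N\in B(H)$ is normal and $NT=TN$ for some $T\in B(H)$, then $N^*T=TN^*$.} Granting $(\mathrm{F})$, the lemma becomes pure bookkeeping. Each of the four identities asserts that one of $A,A^*$ commutes with one of $B,B^*$. Since $A$ is normal and so is $A^*$, applying $(\mathrm{F})$ to $A$ and to $A^*$ shows ``$A$ commutes with $C$'' $\Longleftrightarrow$ ``$A^*$ commutes with $C$'' for $C=B$ and for $C=B^*$; this ties the first identity to the second and the third to the fourth. On the other hand, taking the adjoint of $XY=YX$ gives $Y^*X^*=X^*Y^*$, which turns ``$A$ commutes with $B$'' into ``$A^*$ commutes with $B^*$'' and ``$A^*$ commutes with $B$'' into ``$A$ commutes with $B^*$''; this ties the first identity to the fourth and the second to the third. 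Chaining these, all four statements are equivalent.

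It then remains to prove $(\mathrm{F})$, and here I would run Rosenblum's argument, which uses only the norm-convergent operator exponential and Liouville's theorem, and in particular avoids the spectral theorem, in keeping with the paper's stated philosophy. First, $NT=TN$ gives $N^kT=TN^k$ for all $k\ge 0$ by induction, hence $p(N)T=Tp(N)$ for every polynomial $p$, and passing to the exponential series, $e^{\mu N}T=Te^{\mu N}$ for every $\mu\in\C$; in particular $e^{-\bar\lambda N}Te^{\bar\lambda N}=T$ for every $\lambda\in\C$. Now set $F(\lambda)=e^{\lambda N^*}Te^{-\lambda N^*}$, a $B(H)$-valued entire function of $\lambda$. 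Using the last identity together with the fact that $N$ and $N^*$ commute (this is exactly normality), so that the exponents add, one rewrites
\[
F(\lambda)=e^{\lambda N^*}\,e^{-\bar\lambda N}\,T\,e^{\bar\lambda N}\,e^{-\lambda N^*}=e^{\lambda N^*-\bar\lambda N}\,T\,e^{\bar\lambda N-\lambda N^*}.
\]
The exponent $\lambda N^*-\bar\lambda N$ is skew-adjoint, so $e^{\lambda N^*-\bar\lambda N}$ is unitary, and so is its inverse $e^{\bar\lambda N-\lambda N^*}$; hence $\|F(\lambda)\|\le\|T\|$ for all $\lambda$. By Liouville's theorem (applied to $\lambda\mapsto\varphi(F(\lambda))$ for each bounded linear functional $\varphi$, or to the vector-valued $F$ directly), $F$ is constant, so $F(\lambda)=F(0)=T$. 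Differentiating $F(\lambda)=e^{\lambda N^*}Te^{-\lambda N^*}$ at $\lambda=0$ then gives $N^*T-TN^*=0$, which is $(\mathrm{F})$.

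The step requiring the most care is the middle computation: one must check that $e^{\lambda N^*}$ and $e^{-\bar\lambda N}$ commute and that their exponents combine additively, which is precisely where the hypothesis that $A$ (hence $N$) is normal is used, and one must verify that $\lambda N^*-\bar\lambda N$ is genuinely skew-adjoint so that its exponential is unitary and $F$ is bounded — the particular pairing of $\lambda$ with $N^*$ and $\bar\lambda$ with $N$ is what makes the boundedness work. By contrast, the induction producing $e^{\mu N}T=Te^{\mu N}$, the legitimacy of termwise differentiation of the exponential series, and the appeal to a vector-valued Liouville theorem are all routine once these algebraic points are secured.
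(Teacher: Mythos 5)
Your argument is correct. Note, though, that the paper offers no proof of this lemma at all: it is stated as a known fact (a standard consequence of the Fuglede theorem, flagged as ``to be used without further notice''), so there is no in-paper argument to match yours against. What you supply is a complete, self-contained derivation: the bookkeeping step (Fuglede applied to the normal operators $A$ and $A^*$ links the first identity to the second and the third to the fourth, while taking adjoints links the first to the fourth and the second to the third) is exactly the standard way these four commutation relations are tied together, and your proof of Fuglede itself is Rosenblum's argument, correctly executed: the key points — that $e^{\lambda N^*}$ and $e^{-\bar\lambda N}$ combine additively because $N$ is normal, that $\lambda N^*-\bar\lambda N$ is skew-adjoint so its exponential is unitary and $F$ is uniformly bounded, and that holomorphy is read off from the original definition $F(\lambda)=e^{\lambda N^*}Te^{-\lambda N^*}$ rather than from the rewritten expression involving $\bar\lambda$ — are all in place. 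What your route buys is self-containedness in the spirit of the paper's stated philosophy (no spectral theorem, only the operator exponential and a scalar Liouville argument via functionals); what the paper's choice buys is brevity, treating Fuglede as a black box since it is cited anyway for the product theorems later on.
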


\section{Main Results: Absolute Value and Products}

We start with the following:

\begin{pro}\label{commutativity operators equiv abs va pro}
Let $A,B\in B(H)$ be such that $AB=BA$. If $A$ is normal, then
$|A||B|=|B||A|$.
\end{pro}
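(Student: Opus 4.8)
The plan is to reduce the commutation of the absolute values to the commutation of the original operators by exploiting normality of $A$ together with the Fuglede--Putnam lemma. First I would observe that $|A|=\sqrt{A^*A}=\sqrt{AA^*}=|A^*|$ since $A$ is normal; in particular $|A|$ is a polynomial-limit (a function) of $A^*A=AA^*$, so anything that commutes with both $A$ and $A^*$ commutes with $|A|$. Starting from $AB=BA$, Lemma \ref{Fugelde equivalences lemma} (applicable since $A$ is normal) gives $A^*B=BA^*$ as well, hence $A^*AB=A^*BA=BA^*A$, so $B$ commutes with the positive operator $A^*A$. Taking square roots via the commuting functional calculus for the single positive operator $A^*A$ — or, to stay maximally elementary as the paper wishes, noting that $|A|$ lies in the norm-closed algebra generated by $A^*A$ (Lemma \ref{sqrt A+B Lemma}-style approximation of $\sqrt{\cdot}$ by polynomials on a bounded interval) — we conclude $|A|B=B|A|$.

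Next I would feed this back through Fuglede--Putnam: $|A|$ is positive, hence normal, and $|A|B=B|A|$ therefore implies $|A|B^*=B^*|A|$. Now $|A|$ commutes with $B^*B$: indeed $|A|B^*B=B^*|A|B=B^*B|A|$. Applying the square-root step once more, this time to the positive operator $B^*B$ (whose square root is $|B|$), yields $|A|~|B|=|B|~|A|$, which is exactly the assertion.

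The only genuine point requiring care is the passage ``$X$ commutes with the positive operator $T$'' $\Longrightarrow$ ``$X$ commutes with $\sqrt T$''. I expect this to be the main (and really the sole) obstacle, since the paper has deliberately renounced the spectral theorem. It can be handled cleanly by the standard device behind Lemma \ref{sqrt A+B Lemma}: $\sqrt T$ is the norm-limit of a sequence of polynomials in $T$ (with no constant term, after rescaling $T$ into $[0,1]$), obtained from the uniform polynomial approximation of $t\mapsto\sqrt t$ on $[0,\|T\|]$; each such polynomial manifestly commutes with $X$, and commutation is preserved under norm limits. Applying this twice — once with $T=A^*A$, once with $T=B^*B$ — closes the argument without invoking any machinery beyond what the excerpt already grants.
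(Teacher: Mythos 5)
Your argument is correct and follows essentially the same route as the paper: Fuglede gives $A^*B=BA^*$, so $B$ commutes with $A^*A$ and hence with $|A|$; taking adjoints (your appeal to Fuglede for the self-adjoint $|A|$ amounts to exactly this) gives $|A|B^*=B^*|A|$, whence $|A|$ commutes with $B^*B$ and thus with $|B|$. The only difference is that you spell out the square-root commutation step via polynomial approximation, which the paper leaves implicit.
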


\begin{rema}
The preceding result was proved in \cite{Zeng Young inequality
paper} by assuming that \textit{both }$A$ and $B$ are normal.
\end{rema}

\begin{proof}Since $AB=BA$ and $A$ is normal, we have $A^*B=BA^*$. We then clearly have from the previous two relations:
\[AB=BA\Longrightarrow A^*AB=A^*BA=BA^*A.\]
Hence
\[|A|B=B|A|.\]
Since $|A|$ is self-adjoint, the previous equality gives (by taking
adjoints) $|A|B^*=B^*|A|$. Hence
\[B^*|A|B=|A|B^*B\Longrightarrow B^*B|A|=|A|B^*B\Longrightarrow |B||A|=|A||B|,\]
as required.
\end{proof}

We have already observed above that in general $|AB|\neq|A||B|$. The
following result is somewhat inspired by a one in
\cite{Gustafson-Mortad-2016}.

\begin{thm}\label{ |AB|=|A||B| THM}
Let $A,B\in B(H)$ be self-adjoint such that $AB$ is normal. Then
\[|AB|=|A||B|.\]
\end{thm}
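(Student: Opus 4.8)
The plan is to compute $|AB|^2 = (AB)^*(AB)$ directly and compare it with $(|A||B|)^2$, exploiting that $A$ and $B$ are self-adjoint so that $(AB)^* = B^*A^* = BA$. Thus $|AB|^2 = BA^2B = B|A|^2B$ (since $A$ self-adjoint gives $A^2 = |A|^2$). On the other side, since $AB$ is normal we have $(AB)(AB)^* = (AB)^*(AB)$, i.e. $ABBA = BAAB$, which reads $A B^2 A = B A^2 B$, i.e. $A|B|^2A = B|A|^2B$. So the normality hypothesis hands us a useful symmetric identity for free.

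Next I would try to show $|A|$ and $|B|$ commute, since once that is known, Lemma~\ref{sqrt A+B Lemma} gives $|A||B| \geq 0$ with $(|A||B|)^2 = |A|^2|B|^2$, and we can compare square roots. To get commutativity of the absolute values, the natural route is Fuglede--Putnam via Lemma~\ref{Fugelde equivalences lemma}: the idea is to first establish that $A$ commutes with $|B|$ (equivalently with $B^2$) and $B$ commutes with $|A|$, and then combine. From $AB^2A = BA^2B$ one does not immediately read off commutativity, so I expect to need to feed the normality of $AB$ into the Fuglede machinery more carefully — for instance, $AB$ normal means $(AB)^*(AB)$ commutes with $AB$, or one can look at the polar decomposition of $AB$.

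A cleaner path: since $AB$ is normal, $|AB| = |(AB)^*| = |BA|$, so $|AB|^2 = |BA|^2$ both equal $BA^2B$ and $AB^2A$ as computed, which is just the identity already obtained. The real content should come from showing $|A|$ commutes with $B$. Here is where I expect to use that $A$ is self-adjoint (hence normal) together with $AB$ normal. I would examine the operator $C = AB$; then $C$ normal and $C = AB$ with $A = A^*$. One has $C^*C = BA^2B$ and $CC^* = AB^2A$, and normality forces these equal. I suspect one then shows $A^2B^2 = B^2A^2$ (commutativity of $A^2$ and $B^2$), which by the Löwner--Heinz remark or by uniqueness of square roots yields $|A|^2|B|^2 = |B|^2|A|^2$ and then $|A||B| = |B||A|$. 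Deducing $A^2B^2 = B^2A^2$ from $AB^2A = BA^2B$ plus $AB$ normal is the step I expect to be the main obstacle — it likely requires multiplying the identity on suitable sides by $A$ or $B$ and using normality a second time, or invoking Fuglede--Putnam with the normal operator $AB$ against $A$.

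Once commutativity of $|A|$ and $|B|$ is in hand, the endgame is routine: $(|A||B|)^2 = |A||B||B||A| = |A||B|^2|A| = A^2$-flavoured manipulations giving $|A|^2|B|^2$; meanwhile $|AB|^2 = BA^2B = B|A|^2B$, and using that $B$ commutes with $|A|$ (which should fall out along the way) this equals $|A|^2B^2 = |A|^2|B|^2 = (|A||B|)^2$. Since $|A||B| \geq 0$ and $|AB| \geq 0$ are both positive operators with equal squares, uniqueness of the positive square root gives $|AB| = |A||B|$, as required.
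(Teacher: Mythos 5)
Your setup and your endgame are fine, but the proposal has a genuine gap exactly where you say you expect ``the main obstacle'': you never actually prove the commutation relation ($AB^2=B^2A$, equivalently $A^2B^2=B^2A^2$, equivalently that $|A|$ and $|B|$ commute). The identity $AB^2A=BA^2B$ that you extract from normality is just normality restated, and, as you yourself note, commutativity cannot be read off from it; your suggestions for closing the gap (``invoking Fuglede--Putnam with the normal operator $AB$ against $A$'', or looking at the polar decomposition) are not carried out, and the first one as stated does not even parse, since Fuglede's theorem needs a commutation or intertwining relation as input and $A$ is not assumed to commute with $AB$. So as written the proof is incomplete: everything after ``once commutativity of $|A|$ and $|B|$ is in hand'' is conditional on a statement you have not established.

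The missing idea in the paper's proof is to produce the right \emph{intertwining} relation and then apply the full Fuglede--Putnam theorem (two normal operators), not just Lemma \ref{Fugelde equivalences lemma}. Since $A=A^*$ and $B=B^*$, one has
\[
B(AB)=BAB=(AB)^*B,
\]
i.e.\ $B$ intertwines the normal operators $AB$ and $(AB)^*$. Fuglede--Putnam then gives $B(AB)^*=(AB)B$, that is $B^2A=AB^2$, whence $B^2A^2=AB^2A=A^2B^2$. Now $|AB|^2=(AB)^*AB=AB(AB)^*=AB^2A=A^2B^2$, and since $A^2,B^2$ are commuting positive operators, $|AB|=\sqrt{A^2B^2}=\sqrt{A^2}\sqrt{B^2}=|A||B|$ by Lemma \ref{sqrt A+B Lemma} and uniqueness of the positive square root --- exactly the endgame you sketched. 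So your plan is salvageable, but the decisive step is the observation $BAB=(AB)^*B$ together with Fuglede--Putnam, which is absent from your argument.
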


\begin{rema}
It was noted in \cite{Harte} that if $S,T$ are two non-commuting
self-adjoint operators, then the \textit{inequality} $|ST|\leq
|S||T|$ never holds. So, in our result the normality of the product
transforms the non valid inequality into a true full equality.
\end{rema}

\begin{rema}
Notice that $AB$ being a normal product of two self-adjoint
operators does not necessarily imply that $AB$ is self-adjoint, i.e.
we do not necessarily have $AB=BA$. If, however, we impose further
that $A\geq 0$ (or $B\geq 0$), then $AB$ becomes self-adjoint. See
e.g. \cite{Mortad-PAMS2003}.
\end{rema}

\begin{proof}Since $A$ and $B$ are self-adjoint, we may write
\[B(AB)=BAB=(AB)^*B.\]
Since $AB$ and $(AB)^*$ are normal, the Fuglede-Putnam theorem gives
\[B(AB)^*=(AB)^{**}B \text{ or merely } B^2A=AB^2.\]
Consequently, $B^2A^2=AB^2A=A^2B^2$.

On the other hand, we easily see that

\[|AB|^2=(AB)^*AB=AB(AB)^*=AB^2A=A^2B^2\]

and so
\[|AB|=\sqrt{A^2B^2}=\sqrt{A^2}\sqrt{B^2}=|A||B|,\]
as required.
\end{proof}

Since $|AB|$ is self-adjoint, we have:

\begin{cor}Let $A,B\in B(H)$ be self-adjoint such that $AB$ is normal.
Then $|A||B|$ is self-adjoint, i.e. $|A||B|=|B||A|$. Moreover, if we
also assume that $A,B\geq0$, then $AB\geq0$.
\end{cor}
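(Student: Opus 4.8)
The corollary has two parts, and the plan is to derive both from Theorem \ref{ |AB|=|A||B| THM} and the corollary's own hypotheses with minimal extra work.

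For the first part, I would start from the equality $|AB|=|A||B|$ furnished by the theorem. The key observation is that $|AB|$, being the positive square root of $(AB)^*AB$, is in particular self-adjoint. Hence $|A||B| = (|A||B|)^*$, and since $|A|$ and $|B|$ are themselves self-adjoint (being absolute values), $(|A||B|)^* = |B|^*|A|^* = |B||A|$. Therefore $|A||B| = |B||A|$, which is exactly the assertion that $|A||B|$ is self-adjoint. This part is essentially immediate; there is no real obstacle beyond noting that a product of self-adjoints is self-adjoint precisely when the two factors commute.

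For the second part, assume additionally $A,B\geq 0$. Then $|A|=A$ and $|B|=B$, so the first part already tells us $AB=BA$. Now $A,B\geq 0$ with $AB=BA$, so by the first bullet of Lemma \ref{sqrt A+B Lemma} we get $AB\geq 0$ directly. Alternatively, one can argue that under $A,B\geq 0$ the normal product $AB$ is self-adjoint (as noted in the remark preceding the proof of the theorem, citing \cite{Mortad-PAMS2003}), and $|AB|=|A||B|=AB$ forces $AB$ to equal its own absolute value, hence $AB\geq 0$; but invoking Lemma \ref{sqrt A+B Lemma} is cleaner and self-contained.

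The main subtlety — and it is mild — is making sure the logical dependencies are handled in the right order: the self-adjointness of $|A||B|$ in the first part does not by itself need positivity, whereas the second part genuinely uses $A,B\geq 0$ to identify $|A|$ with $A$ and $|B|$ with $B$, thereby converting "$|A||B|=|B||A|$" into the commutativity hypothesis of Lemma \ref{sqrt A+B Lemma}. I expect no computational difficulty; the whole proof should be three or four short lines.
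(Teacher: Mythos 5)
Your proposal is correct and follows essentially the paper's own (implicit) argument: the corollary is stated as an immediate consequence of the theorem, since $|AB|$ is self-adjoint, which gives $|A||B|=(|A||B|)^*=|B||A|$, and when $A,B\geq 0$ one has $AB=|A||B|=|AB|\geq 0$. Your preferred detour through the first bullet of the square-root lemma for the positivity part is harmless, and the alternative you mention is exactly the paper's one-line route.
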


The assumptions of the previous theorem cannot just be dropped. We
give a counterexample for each hypothesis.
\begin{itemize}
  \item Let
  \[A=\left(
        \begin{array}{cc}
          2 & 0 \\
          0 & -1 \\
        \end{array}
      \right) \text{ and } B=\left(
                               \begin{array}{cc}
                                 0 & 1 \\
                                 1 & 0 \\
                               \end{array}
                             \right).
\]
Then each of $A$ and $B$ is self-adjoint but $AB$ is not normal for
\[AB=\left(
       \begin{array}{cc}
         0 & 2 \\
         -1 & 0 \\
       \end{array}
     \right)
\]
We can easily check that
\[|AB|=\left(
         \begin{array}{cc}
           1 & 0 \\
           0 & 2 \\
         \end{array}
       \right),~|A|=\left(
         \begin{array}{cc}
           2 & 0 \\
           0 & 1 \\
         \end{array}
       \right) \text{ and } |B|=\left(
                                  \begin{array}{cc}
                                    1 & 0 \\
                                    0 & 1 \\
                                  \end{array}
                                \right),
\]
i.e.
\[|AB|\neq|A||B|.\]
  \item Let
\[A=\left(
      \begin{array}{cc}
        0 & 1 \\
        2 & 0 \\
      \end{array}
    \right) \text{ and } B=\left(
                             \begin{array}{cc}
                               0 & 2 \\
                               1 & 0 \\
                             \end{array}
                           \right).
\]
Then, neither $A$ nor $B$ is normal. Their product $AB$ is, however,
self-adjoint (hence normal!) because
\[AB=\left(
       \begin{array}{cc}
         1 & 0 \\
         0 & 4 \\
       \end{array}
     \right)
\]
Next, we have
\[|A|=\left(
        \begin{array}{cc}
          2 & 0 \\
          0 & 1 \\
        \end{array}
      \right),~|B|=\left(
        \begin{array}{cc}
          1 & 0 \\
          0 & 2 \\
        \end{array}
      \right) \text{ and } |AB|=\left(
        \begin{array}{cc}
          1 & 0 \\
          0 & 2 \\
        \end{array}
      \right).
\]
Accordingly,
\[|AB|=\left(
        \begin{array}{cc}
          1 & 0 \\
          0 & 2 \\
        \end{array}
      \right)\neq \left(
        \begin{array}{cc}
          2 & 0 \\
          0 & 2 \\
        \end{array}
      \right)=|A||B|.\]
\end{itemize}

An akin result to Theorem \ref{ |AB|=|A||B| THM} is:

\begin{thm}\label{abs value A, B normal comm AB=A B THM}
Let $A,B\in B(H)$ be such that $AB=BA$. If $A$ is normal, then
\[|AB|=|A||B|.\]
\end{thm}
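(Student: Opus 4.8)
The plan is to reduce the claim to an identity between squares: since $|AB|$ is positive by definition and, as I will verify, $|A||B|$ is also positive, the equality $|AB|=|A||B|$ will follow from the uniqueness of the positive square root once the equality $|AB|^2=(|A||B|)^2$ is established.

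First I would feed the hypotheses into the Fuglede--Putnam theorem (Lemma \ref{Fugelde equivalences lemma}): as $A$ is normal and $AB=BA$, we obtain $A^*B=BA^*$, whence
\[A^*AB=A^*BA=BA^*A,\]
that is, $|A|^2=A^*A$ commutes with $B$. Taking adjoints and using that $|A|^2$ is self-adjoint, $|A|^2$ also commutes with $B^*$. This commutation is the only structural input needed.

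Now the two computations are immediate. On one hand,
\[|AB|^2=(AB)^*(AB)=B^*A^*AB=B^*|A|^2B=|A|^2B^*B=|A|^2|B|^2.\]
On the other hand, Proposition \ref{commutativity operators equiv abs va pro} applies verbatim (its hypotheses are exactly ``$A$ normal and $AB=BA$''), giving $|A||B|=|B||A|$; so $|A|$ and $|B|$ are commuting positive operators, the first item of Lemma \ref{sqrt A+B Lemma} yields $|A||B|\geq 0$, and $(|A||B|)^2=|A||B||A||B|=|A|^2|B|^2$. Comparing the two displays, $|AB|^2=(|A||B|)^2$ with both $|AB|\geq 0$ and $|A||B|\geq 0$, so uniqueness of the positive square root gives $|AB|=|A||B|$.

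I do not expect a genuine obstacle here; the only point requiring care is that $|A||B|$ must be shown to be \emph{positive}, not merely self-adjoint, before one is entitled to pass from equality of the squares to equality of the operators themselves — and that positivity is exactly what Proposition \ref{commutativity operators equiv abs va pro} combined with Lemma \ref{sqrt A+B Lemma} provides.
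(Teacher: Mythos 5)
Your proposal is correct and follows essentially the same route as the paper: both arguments use Fuglede to show $|A|^2$ commutes with $B$ (and $B^*$), compute $|AB|^2=A^*AB^*B=|A|^2|B|^2$, and then use Proposition \ref{commutativity operators equiv abs va pro} together with Lemma \ref{sqrt A+B Lemma} to identify the square root. The only cosmetic difference is that you conclude via uniqueness of the positive square root of $|A|^2|B|^2$, whereas the paper cites the identity $\sqrt{ST}=\sqrt{S}\sqrt{T}$ for commuting positive operators, which is proved by exactly that uniqueness argument.
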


\begin{rema}
It was also noted in \cite{Harte} that if $S,T$ are \textit{two}
commuting normal operators, then the \textit{inequality} $|ST|\leq
|S||T|$ holds. So, our result here is stronger.
\end{rema}

\begin{proof}Since $AB=BA$ and $A$ is normal, we get
$A^*B=BA^*$ or $AB^*=B^*A$. Hence
\[|AB|^2=(AB)^*AB=B^*A^*AB=A^*B^*AB=A^*AB^*B.\]
By Proposition \ref{commutativity operators equiv abs va pro},
$A^*AB^*B=B^*BA^*A$. Consequently,
\[|AB|=\sqrt{A^*AB^*B}=\sqrt{A^*A}\sqrt{B^*B}=|A||B|,\]
as required.
\end{proof}

Before generalizing the previous result, we give some direct
consequences. The first one is a funny application.

\begin{cor}
Let $A,B\in B(H)$ be such that $AB=BA$. If $A$ and $B$ are normal,
then
\[|AB|=|A^*B|=|AB^*|=|A^*B^*|=|B^*A^*|=|B^*A|=|BA^*|=|BA|.\]
\end{cor}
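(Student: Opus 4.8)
The plan is to reduce everything to Theorem \ref{abs value A, B normal comm AB=A B THM} together with the commutation consequences of Fuglede--Putnam (Lemma \ref{Fugelde equivalences lemma}). The key observation is that the eight products appearing in the corollary are, up to adjoints, built from the normal operators $A$ and $B$ and their adjoints $A^*,B^*$, which are again normal, and all of these still commute pairwise. So I would first record that since $A$ is normal and $AB=BA$, Lemma \ref{Fugelde equivalences lemma} gives $A^*B=BA^*$, $AB^*=B^*A$ and $A^*B^*=B^*A^*$; applying the same lemma with the roles of $A$ and $B$ interchanged (legitimate since $B$ is normal too) gives the remaining commutation relations, so in fact \emph{every} pair chosen from $\{A,A^*\}\times\{B,B^*\}$ commutes.

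Next I would apply Theorem \ref{abs value A, B normal comm AB=A B THM} four times. Since $A$ is normal and commutes with $B$, $|AB|=|A||B|$. Since $A^*$ is normal (being the adjoint of a normal operator) and commutes with $B$, $|A^*B|=|A^*||B|=|A||B|$, where I use $|A^*|=|A|$ which holds precisely because $A$ is normal (this equivalence is recorded in the introduction). Likewise, since $A$ is normal and commutes with $B^*$, $|AB^*|=|A||B^*|=|A||B|$, using $|B^*|=|B|$ from normality of $B$; and since $A^*$ is normal and commutes with $B^*$, $|A^*B^*|=|A^*||B^*|=|A||B|$. That settles the first four absolute values, all equal to $|A||B|$.

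For the remaining four, I would use the elementary identity $|T^*|=|T|$ whenever $T$ is normal, applied to the relevant products. But one must first check those products are normal: $AB$ is normal because it is a product of two commuting normal operators (this is standard, and in any case follows from Proposition \ref{commutativity operators equiv abs va pro}-type reasoning, since $(AB)^*AB=A^*B^*AB=A^*AB^*B=B^*BA^*A=AB(AB)^*$); the same computation applies to $A^*B$, $AB^*$, $A^*B^*$. Hence $BA^*=(AB^*)^*$ gives $|BA^*|=|(AB^*)^*|=|AB^*|$, and similarly $|B^*A|=|(A^*B)^*|=|A^*B|$, $|B^*A^*|=|(AB)^*|=|AB|$, and $|BA|=|(A^*B^*)^*|=|A^*B^*|$. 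Chaining these equalities with the four from the previous paragraph yields the full string of eight equal absolute values.

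I do not anticipate a serious obstacle here; the only thing to be careful about is the bookkeeping of which operator plays the role of the ``normal'' factor in each invocation of Theorem \ref{abs value A, B normal comm AB=A B THM}, and making explicit that $A^*,B^*$ are normal and that all four operators $A,A^*,B,B^*$ mutually commute, which is exactly what Lemma \ref{Fugelde equivalences lemma} delivers. The mildest subtlety is justifying $|AB^*|=|A||B|$: here the theorem gives $|AB^*|=|A||B^*|$ and then normality of $B$ gives $|B^*|=|B|$, so no new input is needed.
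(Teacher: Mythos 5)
Your proposal is correct and follows essentially the same route as the paper: invoke Theorem \ref{abs value A, B normal comm AB=A B THM} together with the Fuglede commutation relations of Lemma \ref{Fugelde equivalences lemma} and the identities $|A^*|=|A|$, $|B^*|=|B|$ to reduce each product to $|A||B|$. The only (harmless) deviation is in the four reversed products, which the paper also handles by the same theorem plus $|A||B|=|B||A|$ from Proposition \ref{commutativity operators equiv abs va pro}, whereas you pass to adjoints using the normality of $AB$, $A^*B$, $AB^*$, $A^*B^*$ and $|T^*|=|T|$; both are valid.
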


\begin{proof}
Since $A$ and $B$ are normal, $|A|=|A^*|$ and $|B|=|B^*|$. As
$AB=BA$, then $|A||B|=|B||A|$ for $A$ (or $B$!) is normal. Now,
apply Theorem \ref{abs value A, B normal comm AB=A B THM} to each of
the eight products.
\end{proof}

\begin{cor}
Let $A,B\in B(H)$ be such that $AB=BA$. If $A$ is normal and $B$ is
invertible, then
\[|AB^{-1}|=|A||B^{-1}|.\]
\end{cor}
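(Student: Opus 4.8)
The plan is to reduce this corollary directly to Theorem \ref{abs value A, B normal comm AB=A B THM}, applied with the pair $(A, B^{-1})$ in place of $(A,B)$. The hypotheses of that theorem require two things: that the first operator be normal, which is given, and that the two operators commute. So the only real content to verify is that $A$ commutes with $B^{-1}$.

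First I would establish that $AB^{-1} = B^{-1}A$. This is the standard fact that commutativity is inherited by inverses: starting from $AB = BA$, multiply on the left by $B^{-1}$ and on the right by $B^{-1}$ to obtain $B^{-1}AB B^{-1} = B^{-1}BA B^{-1}$, that is, $B^{-1}A = AB^{-1}$. No appeal to normality or to the Fuglede-Putnam lemma is needed for this step.

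Then I would invoke Theorem \ref{abs value A, B normal comm AB=A B THM} with the normal operator $A$ and the operator $B^{-1}$, which now commute, to conclude
\[|AB^{-1}| = |A|\,|B^{-1}|,\]
as required. (One could note in passing that, since $A$ is normal, the same argument applies to $A^*B^{-1}$, $AB^{-1}{}^*$, etc., exactly as in the preceding corollaries, but that is not needed for the statement at hand.)

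There is essentially no obstacle here: the corollary is a short specialization of the theorem, and the only point worth spelling out — and the one a careless reader might skip — is the passage from $AB = BA$ to $AB^{-1} = B^{-1}A$.
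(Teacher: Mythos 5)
Your proof is correct and follows exactly the paper's route: pass from $AB=BA$ to $AB^{-1}=B^{-1}A$ by conjugating with $B^{-1}$, then apply Theorem \ref{abs value A, B normal comm AB=A B THM} to the commuting pair $(A,B^{-1})$ with $A$ normal. The only difference is that you spell out the elementary inverse-commutation step, which the paper states without detail.
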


\begin{proof}Since $AB=BA$ and $B$ is
invertible, we have $AB^{-1}=B^{-1}A$. Theorem \ref{abs value A, B
normal comm AB=A B THM} does the remaining job.
\end{proof}

\begin{cor}\label{absolute invertible one normal coroll}
Let $A\in B(H)$ be normal and invertible. Then
\[|A^{-1}|=|A|^{-1}.\]
\end{cor}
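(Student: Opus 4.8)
The plan is to deduce this directly from the preceding corollary by taking $B=A$ there. Since $A$ is normal it trivially commutes with itself, and $A$ is assumed invertible, so that corollary (with $B$ replaced by $A$) yields
\[
|A|\,|A^{-1}| = |AA^{-1}| = |I| = I.
\]
It then remains to upgrade this one-sided relation to $|A^{-1}| = |A|^{-1}$. For this I would note that $|A|$ is invertible: $A$ invertible forces $A^*A$ to be invertible (with inverse $A^{-1}(A^{-1})^*$), and hence its positive square root $|A|=\sqrt{A^*A}$ is invertible as well, since $|A|^2$ is invertible. Multiplying $|A|\,|A^{-1}|=I$ on the left by $|A|^{-1}$ then gives $|A^{-1}|=|A|^{-1}$.

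Alternatively — and perhaps more in keeping with the elementary spirit of the paper — one can argue directly. Compute
\[
|A^{-1}|^2 = (A^{-1})^*A^{-1} = (A^*)^{-1}A^{-1} = (AA^*)^{-1},
\]
and since $A$ is normal, $AA^*=A^*A$, so $|A^{-1}|^2 = (A^*A)^{-1} = (|A|^2)^{-1} = (|A|^{-1})^2$. Now $|A|^{-1}$ is a positive operator (the inverse of an invertible positive operator is again positive and self-adjoint), hence a positive square root of $(|A|^2)^{-1}$; so is $|A^{-1}|$, which is positive by construction. By uniqueness of the positive square root, $|A^{-1}| = |A|^{-1}$.

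Either way there is essentially no serious obstacle here; the only point that merits a word of care is the invertibility of $|A|$ — equivalently, the positivity and self-adjointness of $|A|^{-1}$ — which is an immediate consequence of the invertibility of $A^*A$.
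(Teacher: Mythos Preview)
Your first approach is essentially the paper's: the paper also writes $I=|AA^{-1}|=|A||A^{-1}|$ and then deduces $|A|^{-1}=|A^{-1}|$. The only minor difference is in how the invertibility of $|A|$ is justified --- the paper observes that the self-adjoint operator $|A|$ is right invertible and hence invertible, whereas you argue via the invertibility of $A^*A=|A|^2$; both are clean. Your alternative direct computation, showing $|A^{-1}|^2=(|A|^{-1})^2$ via normality and then invoking uniqueness of the positive square root, is a genuinely different and equally elementary route that the paper does not take; it has the mild advantage of being self-contained (no appeal to the product formula $|AB|=|A||B|$), at the cost of one extra line of algebra.
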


\begin{proof}It is clear that
\[I=|AA^{-1}|=|A||A^{-1}|.\]
So, the self-adjoint $|A|$ is right invertible and so it is
invertible (cf. \cite{Dehimi-Mortad-II}) and:
\[|A|^{-1}=|A^{-1}|.\]
\end{proof}

Theorem \ref{abs value A, B normal comm AB=A B THM} may be
generalized as follows:

\begin{pro}\label{abso valu product n normal proposition}
Let $(A_i)_{i=1,\cdots, n}$ be a family of pairwise commuting
elements of $B(H)$. If all $(A_i)_{i=1,\cdots, n}$ \textbf{but one}
are normal, then
\[|A_1A_2\cdots A_{n-1}A_n|=|A_1||A_2|\cdots|A_{n-1}||A_n|.\]
\end{pro}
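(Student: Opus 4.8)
The plan is to argue by induction on $n$, using Theorem \ref{abs value A, B normal comm AB=A B THM} as the base case and the key engine of the induction step. Suppose without loss of generality that $A_1, A_2, \ldots, A_{n-1}$ are normal and $A_n$ is arbitrary (the labelling is irrelevant since the product of commuting normal operators is normal). Set $P = A_1 A_2 \cdots A_{n-1}$. First I would observe that $P$ is normal: it is a product of pairwise commuting normal operators, and such a product is normal (each factor commutes with the others and, by Lemma \ref{Fugelde equivalences lemma}, with their adjoints, so $P^*P = PP^*$ follows by repeatedly moving adjoints past factors). Next, $P$ commutes with $A_n$, since $A_n$ commutes with each $A_i$ individually. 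Then Theorem \ref{abs value A, B normal comm AB=A B THM} applied to the pair $(P, A_n)$ — with $P$ in the role of the normal operator — gives
\[|A_1 A_2 \cdots A_{n-1} A_n| = |P A_n| = |P|\,|A_n| = |A_1 A_2 \cdots A_{n-1}|\,|A_n|.\]
Finally I would invoke the inductive hypothesis on the $n-1$ pairwise commuting normal operators $A_1, \ldots, A_{n-1}$ to get $|A_1 \cdots A_{n-1}| = |A_1| \cdots |A_{n-1}|$, and substitute.

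For the induction to close cleanly I need two auxiliary facts that are not stated verbatim in the excerpt but are elementary consequences of what is available: (i) a finite product of pairwise commuting normal operators is normal, and (ii) such a product still commutes with anything that commuted with all the factors. Both follow from Lemma \ref{Fugelde equivalences lemma}: from $A_iA_j=A_jA_i$ with $A_i$ normal we get $A_i^*A_j=A_jA_i^*$, and iterating lets us commute $(A_1\cdots A_{n-1})^*=A_{n-1}^*\cdots A_1^*$ past $A_1\cdots A_{n-1}$ one factor at a time. I would spell this out briefly, perhaps as a one-line remark or an induction within the induction, since it is the only place where any real content enters.

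The main obstacle is essentially bookkeeping rather than a genuine mathematical difficulty: making sure that at each stage the operator playing the role of the ``normal'' factor in Theorem \ref{abs value A, B normal comm AB=A B THM} really is normal and really does commute with the remaining arbitrary factor. The single non-normal operator $A_n$ must be kept out of the ``normal block'' at every step, which is why grouping the first $n-1$ factors together is the right move — it isolates $A_n$ once and for all and reduces everything to a clean two-operator application plus a smaller instance of the same statement. No use of the spectral theorem is needed, keeping with the paper's stated philosophy.
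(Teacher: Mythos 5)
Your argument is correct, but it is organized differently from the paper's. The paper pushes the single non-normal factor to the right and then peels off one normal factor at a time: at each step the leftmost operator $A_1$ is a genuine normal operator commuting with the product of the remaining ones, so Theorem \ref{abs value A, B normal comm AB=A B THM} applies directly and no auxiliary fact is needed. You instead group the whole normal block into $P=A_1\cdots A_{n-1}$ and apply the theorem once to the pair $(P,A_n)$; this is perfectly sound, but it forces you to prove the extra lemma that a product of pairwise commuting normal operators is normal, which you do correctly via Lemma \ref{Fugelde equivalences lemma} (the paper states the analogous fact only for sums, so you are right that it must be supplied). Your route buys a cleaner single top-level application of the two-operator theorem at the cost of that lemma; the paper's route avoids the lemma at the cost of carrying the non-normal factor through every inductive step. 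One small shared gloss: when you say the labelling is irrelevant, relabelling reorders the factors on \emph{both} sides, and restoring the original order on the right requires that the moduli $|A_i|$ pairwise commute; this does hold, by Proposition \ref{commutativity operators equiv abs va pro}, since in every pair $(A_i,A_j)$ at least one operator is normal, but it deserves a sentence (the paper's ``push the non-normal factor to the right'' elides the same point). Your stated reason for the irrelevance of labelling (normality of the product of the normal factors) does not by itself address this reordering of the right-hand side.
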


\begin{proof}If $A_1,A_2,\cdots,A_{n-1}$ are normal, then just apply
the preceding theorem by using a proof by induction. Otherwise, just
use commutativity to push the non normal factor to the right as many
times as possible until it will be the last factor on the right of
the product "$\prod_{i=1}^nA_i$". Then proceed as just indicated
three lines above.
\end{proof}

It is simple to see that $|A^2|=|A|^2$ does not hold in general. For
instance, let
\[A=\left(
      \begin{array}{cc}
        0 & 2 \\
        1 & 0 \\
      \end{array}
    \right).
\]
Then
\[|A^2|=\left(
          \begin{array}{cc}
            \sqrt 2 & 0 \\
            0 & \sqrt 2 \\
          \end{array}
        \right)\neq |A|^2=\left(
      \begin{array}{cc}
        2 & 0 \\
        0 & 1 \\
      \end{array}
    \right).
\]

But for normal $A$, things are better.

\begin{cor}
Let $A\in B(H)$ be normal and invertible. Let $n\in\Z$. Then
\[|A^n|=|A|^n.\]
\end{cor}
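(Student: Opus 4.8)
The plan is to reduce the integer statement $|A^n|=|A|^n$ to the two cases $n\geq 0$ and $n<0$, handling each with the tools already assembled. For $n\geq 1$, the operator $A$ is normal and hence certainly commutes with itself, so Proposition \ref{abso valu product n normal proposition} applied to the $n$-fold product $A_1=\cdots=A_n=A$ (all factors normal) yields $|A^n|=|A|^n$ immediately; the case $n=0$ is trivial since $A^0=I=|A|^0$. Thus the only real content is the case of negative exponents.

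For $n<0$, write $n=-m$ with $m\geq 1$. Since $A$ is normal and invertible, $A^{-1}$ is again normal (it commutes with its adjoint because $(A^{-1})^*=(A^*)^{-1}$ and $A^*$ commutes with $A^{-1}$, $A$ commuting with $A^*$). Hence $A^{-1}$ is a normal operator to which the already-established positive-exponent case applies: $|(A^{-1})^m|=|A^{-1}|^m$, i.e. $|A^n|=|A^{-1}|^{-n}$. It then remains to invoke Corollary \ref{absolute invertible one normal coroll}, which gives $|A^{-1}|=|A|^{-1}$, so that $|A^n|=(|A|^{-1})^m=(|A|^m)^{-1}=|A|^{-m}=|A|^n$. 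Here I am using that for an invertible positive operator $P$ and a positive integer $m$ one has $(P^{-1})^m=(P^m)^{-1}$, which is routine.

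I do not expect a serious obstacle: the skeleton is simply ``positive exponents via the Proposition, negative exponents by applying that case to $A^{-1}$ and then quoting the inverse corollary.'' The one point requiring a line of care is the verification that $A^{-1}$ is normal when $A$ is; this follows from Lemma \ref{Fugelde equivalences lemma} (or a direct computation) since $A$ normal and $A A^{-1}=A^{-1}A$ force $A^* A^{-1}=A^{-1}A^*$, whence $(A^{-1})^*A^{-1}=A^{-1}(A^{-1})^*$. With that observation in hand the proof is essentially three short invocations of earlier results.
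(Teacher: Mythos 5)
Your proposal is correct and follows essentially the same route as the paper: the nonnegative case via Proposition \ref{abso valu product n normal proposition} applied to the $n$-fold product of $A$ with itself, and the negative case by applying that result to the normal operator $A^{-1}$ together with Corollary \ref{absolute invertible one normal coroll}. Your version simply spells out the details (normality of $A^{-1}$, the identity $(P^{-1})^m=(P^m)^{-1}$) that the paper leaves implicit.
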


\begin{proof}The case $n\geq 0$ follows from Proposition \ref{abso valu product n normal
proposition}. The case $n<0$ follows from Proposition \ref{abso valu
product n normal proposition} and Corollary \ref{absolute invertible
one normal coroll}.
\end{proof}

\section{Main Results: Absolute Value and Sums}

We now turn to the triangle inequality w.r.t. $|\cdot|$. We have two
different versions.

Before all else, we state a result (perhaps known to many) which
will be called on later. Its proof relies on the following yet
simpler result.

\begin{lem}\label{anti-symmetric square negative lemma}
If $A\in B(H)$ is anti-symmetric, i.e. $A^*=-A$, then $A^2\leq 0$.
\end{lem}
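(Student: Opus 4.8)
The claim is that if $A\in B(H)$ satisfies $A^*=-A$, then $A^2\leq 0$, i.e. $\langle A^2 x,x\rangle\leq 0$ for all $x\in H$.

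The plan is to compute the quadratic form directly. For any $x\in H$, I would write $\langle A^2 x, x\rangle = \langle Ax, A^* x\rangle$, using the definition of the adjoint on the operator $A$. Then, substituting $A^* = -A$, this becomes $\langle Ax, -Ax\rangle = -\langle Ax, Ax\rangle = -\|Ax\|^2$. Since $\|Ax\|^2 \geq 0$, we conclude $\langle A^2 x, x\rangle \leq 0$ for every $x\in H$, which is exactly the assertion $A^2\leq 0$. One should also note that $A^2$ is self-adjoint: $(A^2)^* = (A^*)^2 = (-A)^2 = A^2$, so that the notation $A^2\leq 0$ is legitimate in the sense defined in the introduction.

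There is essentially no obstacle here; the only mild subtlety is making sure the reader recalls that ``$T\leq 0$'' requires $T$ to be self-adjoint, which we have just verified, and that on a complex Hilbert space a nonnegative (or nonpositive) quadratic form automatically forces self-adjointness anyway. So the argument is a two-line computation: pass $A$ across the inner product to pick up an adjoint, invoke anti-symmetry to turn it into $-A$, and recognize the resulting expression as minus a squared norm.
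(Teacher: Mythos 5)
Your proof is correct and complete: the computation $\langle A^2x,x\rangle=\langle Ax,A^*x\rangle=-\|Ax\|^2\leq 0$, together with the observation that $(A^2)^*=(A^*)^2=A^2$, is exactly what is needed, and the paper itself states this lemma without supplying a proof, so there is nothing to deviate from. For the record, the same conclusion can be reached purely at the operator level by writing $A^2=(-A^*)A=-A^*A$, which is $\leq 0$ because $A^*A\geq 0$; this is just your argument with the quadratic form suppressed.
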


\begin{lem}\label{Real part smaller than Absolute value Lemma}
Let $T\in B(H)$ be hyponormal (i.e. $TT^*\leq T^*T$, that is,
$\|T^*x\|\leq\|Tx\|$ for all $x\in H$). Then
\[\REAL T=\frac{T+T^*}{2}\leq \sqrt{T^*T}=|T|.\]
\end{lem}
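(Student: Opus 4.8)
The plan is to reduce the claim to a comparison of two \emph{positive} operators, so that the L\"{o}wner--Heinz inequality (Theorem~\ref{Lowner-Heinz inequality theorem}) can be invoked. Precisely, I would first establish the operator inequality
\[(\REAL T)^2\le T^*T=|T|^2,\]
and then extract square roots. To prove it, observe that $\frac{T-T^*}{2}$ is anti-symmetric, so Lemma~\ref{anti-symmetric square negative lemma} gives $\left(\frac{T-T^*}{2}\right)^2\le 0$, that is, $T^2+(T^*)^2\le TT^*+T^*T$. Substituting this into the expansion
\[\left(\frac{T+T^*}{2}\right)^2=\frac{T^2+(T^*)^2+TT^*+T^*T}{4}\]
yields $(\REAL T)^2\le\frac12(TT^*+T^*T)$, and the hyponormality hypothesis $TT^*\le T^*T$ then gives $(\REAL T)^2\le T^*T$.

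With $(\REAL T)^2\le |T|^2$ in hand, both sides are positive operators, so the L\"{o}wner--Heinz inequality applied with $\alpha=\frac12$ gives
\[|\REAL T|=\bigl((\REAL T)^2\bigr)^{1/2}\le (T^*T)^{1/2}=|T|.\]
Since $\REAL T$ is self-adjoint, the elementary fact that $S\le |S|$ for every self-adjoint $S$ finally gives $\REAL T\le|\REAL T|\le|T|$, which is the asserted inequality.

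The reason the statement is not completely immediate is that $\REAL T$ need not be a positive operator, so one cannot feed the desired comparison $\REAL T\le|T|$ straight into L\"{o}wner--Heinz; squaring $\REAL T$ (which \emph{is} positive) and then passing back to $|\REAL T|$ is the device that legitimises taking square roots, and I expect this to be the only genuinely delicate point. Everything else is routine: the inequality $T^2+(T^*)^2\le TT^*+T^*T$ holds for \emph{any} $T\in B(H)$ via Lemma~\ref{anti-symmetric square negative lemma}, and the sole place the hyponormality of $T$ enters is the passage from $\frac12(TT^*+T^*T)$ to $T^*T$.
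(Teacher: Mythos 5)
Your proposal is correct and follows essentially the same route as the paper: the anti-symmetry of $\frac{T-T^*}{2}$ (Lemma \ref{anti-symmetric square negative lemma}) plus hyponormality gives $(\REAL T)^2\leq T^*T$, then L\"{o}wner--Heinz extracts square roots and the fact that $S\leq |S|$ for self-adjoint $S$ finishes the argument, exactly as in the paper (which merely works with $T+T^*$ and $2|T|$ instead of dividing by $2$).
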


\begin{proof}

It is clear that $T-T^*$ is anti-symmetric and so by Lemma
\ref{anti-symmetric square negative lemma}:\\ $(T-T^*)^2\leq 0$.
Hence
\[(T-T^*)^2\leq 0\Longleftrightarrow T^2+T^{*^{2}}-TT^*-T^*T\leq0.\]

But, $T$ is hyponormal and so $-TT^*-T^*T\geq -2T^*T$. So,
\[T^2+T^{*^{2}}-2T^*T\leq0\]
or
\[T^2+T^{*^{2}}+TT^*+T^*T\leq T^2+T^{*^{2}}+2T^*T\leq 4T^*T.\]
Therefore,
\[(T+T^*)^2\leq 4T^*T \text{ or } |T+T^*|\leq 2|T|\]
 by Theorem \ref{Lowner-Heinz inequality theorem}.

Remembering that $S^{-}=\frac{1}{2}(|S|-S)\geq 0$ whenever $S$ is
self-adjoint, we conclude that
\[T+T^*\leq |T+T^*|\leq 2|T|=2\sqrt{T^*T},\]
as required.
\end{proof}

The following fairly simple result is also useful to us.

\begin{lem}\label{product hyponormal}Let $A,B\in B(H)$ such that $A$ is normal and $B$ is
hyponormal. If $AB=BA$, then $A^*B$ is hyponormal.
\end{lem}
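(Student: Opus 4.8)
The plan is to verify the defining inequality for hyponormality of $A^*B$ directly, namely that $(A^*B)(A^*B)^*\leq (A^*B)^*(A^*B)$. First I would expand both sides using the adjoint rules. Since $A$ is normal and commutes with $B$, the Fuglede--Putnam lemma (Lemma \ref{Fugelde equivalences lemma}) gives the full slate of commutation relations among $A,A^*,B,B^*$ that we need: in particular $A^*B=BA^*$, $AB^*=B^*A$, and $A^*B^*=B^*A^*$. So I would compute
\[(A^*B)(A^*B)^*=A^*BB^*A \quad\text{and}\quad (A^*B)^*(A^*B)=B^*AA^*B.\]

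Next I would use the commutation relations to line these up for comparison. For the left-hand side, $A^*BB^*A = A^*AB B^*$ after moving $A$ past $B^*$ (using $AB^*=B^*A$) and then recognizing $A^*A$; similarly the right-hand side equals $A^*A B^*B$ after using $AA^*=A^*A$ (normality of $A$) and $A^*B=BA^*$ to rearrange. Thus the desired inequality reduces to
\[A^*A\, BB^* \leq A^*A\, B^*B,\]
i.e. to $A^*A\,(B^*B-BB^*)\geq 0$. Now $B$ hyponormal means $B^*B-BB^*\geq 0$, and $A^*A=|A|^2\geq 0$; moreover $A^*A$ commutes with $B^*B-BB^*$ because $|A|$ commutes with both $B$ and $B^*$ (this is exactly the content of Proposition \ref{commutativity operators equiv abs va pro} and its proof, applied with the normal operator $A$). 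The product of two commuting positive operators is positive (Lemma \ref{sqrt A+B Lemma}), so $A^*A(B^*B-BB^*)\geq 0$, which is what we wanted.

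The main obstacle is purely bookkeeping: making sure that every rearrangement of the four-letter words $A^*BB^*A$ and $B^*AA^*B$ is justified by one of the Fuglede--Putnam commutation relations, and that the reduction genuinely lands on a product of two \emph{commuting} positive operators rather than an expression like $XY$ with $X,Y\geq 0$ but $XY\neq YX$ (which need not be positive). Establishing that $|A|$ commutes with $B^*B-BB^*$ is the one point that requires invoking the earlier proposition rather than a one-line manipulation, so I would state that step explicitly.
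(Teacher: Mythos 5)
Your argument is correct, but it follows a genuinely different route from the paper. You verify the operator inequality $(A^*B)(A^*B)^*\leq (A^*B)^*(A^*B)$ algebraically: using the Fuglede-type commutation relations of Lemma \ref{Fugelde equivalences lemma} you rewrite $A^*BB^*A=A^*A\,BB^*$ and $B^*AA^*B=A^*A\,B^*B$, so the hyponormality of $A^*B$ becomes the single identity $(A^*B)^*(A^*B)-(A^*B)(A^*B)^*=A^*A\bigl(B^*B-BB^*\bigr)$, and the right-hand side is positive because $A^*A\geq 0$ commutes with the positive defect $B^*B-BB^*$ (indeed $A^*AB=A^*BA=BA^*A$ and its adjoint give commutation with $B$ and $B^*$), invoking the first bullet of Lemma \ref{sqrt A+B Lemma}. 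The paper instead uses the norm characterization $\|T^*x\|\leq\|Tx\|$ and finishes in one line: $\|(A^*B)^*x\|=\|B^*Ax\|\leq\|BAx\|=\|ABx\|=\|A^*Bx\|$, which needs only the hypotheses as stated (hyponormality of $B$ applied to the vector $Ax$, the commutation $AB=BA$, and normality of $A$ in the form $\|Ay\|=\|A^*y\|$) and, notably, no Fuglede--Putnam and no positivity lemma. Your version is heavier on machinery but yields a slightly more quantitative byproduct, namely the explicit formula for the self-commutator of $A^*B$ as $A^*A$ times that of $B$; the paper's version is the more elementary and shorter verification. Your one point of mild overkill is citing Proposition \ref{commutativity operators equiv abs va pro} for the commutation of $A^*A$ with $B^*B-BB^*$: the two-step computation above suffices and is worth writing out, since the proposition's statement only gives $|A||B|=|B||A|$ and it is its proof, not its statement, that contains what you need.
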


\begin{proof}Let $x\in H$. As $AB=BA$, by the normality of $A$ and the hyponormality of $B$ we have
\[\|(A^*B)^*x\|=\|B^*Ax\|\leq \|BAx\|=\|ABx\|=\|A^*Bx\|,\]
establishing the hyponormality of $A^*B$.
\end{proof}

 Here is the first version of the triangle
inequality.

\begin{thm}\label{triangle inequality abs valu THM} Let $A,B\in B(H)$ be such that $AB=BA$. If $A$ is normal and $B$ is hyponormal,
then the following triangle inequality holds:
\[|A+B|\leq |A|+|B|.\]
\end{thm}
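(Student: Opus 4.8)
The plan is to reduce the operator inequality $|A+B| \le |A|+|B|$ to a statement about positive operators that can be attacked with the lemmas already assembled, principally Lemma~\ref{Real part smaller than Absolute value Lemma} and the L\"owner--Heinz inequality. Squaring is the natural first move: since $|A|+|B|$ is positive and $|A+B|$ is positive, and since these two operators will turn out to commute (see below), the inequality $|A+B|\le |A|+|B|$ is equivalent to $|A+B|^2 \le (|A|+|B|)^2$ by the L\"owner--Heinz inequality (or by the Remark following it). So the first step is to write $|A+B|^2 = (A+B)^*(A+B) = A^*A + B^*B + A^*B + B^*A$ and $(|A|+|B|)^2 = |A|^2 + |B|^2 + |A||B| + |B||A| = A^*A + B^*B + |A||B| + |B||A|$. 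Thus the desired inequality becomes
\[
A^*B + B^*A \le |A||B| + |B||A|.
\]

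The next step is to recognize the left side as twice the real part of an operator that is hyponormal. Set $T = A^*B$. Since $A$ is normal and $B$ is hyponormal and $AB = BA$, Lemma~\ref{product hyponormal} tells us $T = A^*B$ is hyponormal. Hence by Lemma~\ref{Real part smaller than Absolute value Lemma},
\[
A^*B + B^*A = T + T^* \le 2|T| = 2|A^*B| = 2\sqrt{(A^*B)^*A^*B}.
\]
Now I would simplify $|A^*B|$: using $AB = BA$ and normality of $A$ (so $A^*B = BA^*$, $AB^* = B^*A$, etc.\ via Lemma~\ref{Fugelde equivalences lemma}), we compute $(A^*B)^*A^*B = B^*A A^*B = B^*A^*AB = A^*A B^*B$ (pushing the commuting normal factors around), so $|A^*B|^2 = A^*A\,B^*B$. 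By Proposition~\ref{commutativity operators equiv abs va pro}, $A^*A$ and $B^*B$ commute, so $|A^*B| = \sqrt{A^*A\,B^*B} = \sqrt{A^*A}\sqrt{B^*B} = |A||B|$ by Lemma~\ref{sqrt A+B Lemma}. Therefore $A^*B + B^*A \le 2|A||B|$. Since $|A|$ and $|B|$ commute (again Proposition~\ref{commutativity operators equiv abs va pro}), we have $2|A||B| = |A||B| + |B||A|$, which is exactly the inequality displayed at the end of the first paragraph.

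Finally I would close the loop: we have shown $|A+B|^2 \le (|A|+|B|)^2$, and both operands are positive. To pass to $|A+B| \le |A|+|B|$ via L\"owner--Heinz, I need $|A+B|^2$ and $(|A|+|B|)^2$ to commute, or rather I apply the square-root version: $A \ge B \ge 0$ together with $AB = BA$ gives $A^{1/2}\ge B^{1/2}$ — but actually what the Remark after Theorem~\ref{Lowner-Heinz inequality theorem} gives directly is the converse direction. The cleaner route is: since the displayed inequality $|A+B|^2\le (|A|+|B|)^2$ holds between two positive operators, apply Theorem~\ref{Lowner-Heinz inequality theorem} with $\alpha = 1/2$ to conclude $|A+B| = (|A+B|^2)^{1/2} \le ((|A|+|B|)^2)^{1/2} = |A|+|B|$, where the last equality uses that $|A|+|B|$ is positive so its square has positive square root equal to itself. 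No commutativity between the two big operators is needed for this direction, since L\"owner--Heinz is stated without a commutativity hypothesis. I expect the main obstacle to be bookkeeping: making sure every application of Fuglede--Putnam (Lemma~\ref{Fugelde equivalences lemma}) is legitimate (which factor is normal at each step) when rearranging $B^*AA^*B$ into $A^*AB^*B$, and correctly invoking Proposition~\ref{commutativity operators equiv abs va pro} to get the two commutativity facts ($|A||B| = |B||A|$ and $A^*A$ commuting with $B^*B$) that make the squared inequality collapse to the form handled by Lemma~\ref{Real part smaller than Absolute value Lemma}.
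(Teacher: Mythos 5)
Your proposal is correct and follows essentially the same route as the paper: square the inequality, use Proposition \ref{commutativity operators equiv abs va pro} to reduce it to $A^*B+B^*A\leq 2\sqrt{A^*A}\sqrt{B^*B}=2\sqrt{A^*AB^*B}$, recognize the left side as $T+T^*$ with $T=A^*B$ hyponormal (Lemma \ref{product hyponormal}), apply Lemma \ref{Real part smaller than Absolute value Lemma}, and finish with L\"owner--Heinz. Your closing observation that no commutativity is needed to pass from $|A+B|^2\leq(|A|+|B|)^2$ to the conclusion is exactly right and matches the paper's use of Theorem \ref{Lowner-Heinz inequality theorem}.
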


\begin{proof}

Since $A$ is normal and $AB=BA$, we know from Proposition
\ref{commutativity operators equiv abs va pro} that $|A||B|=|B||A|$.
Hence

\begin{align*}|A+B|^2\leq (|A|+|B|)^2&\Longleftrightarrow (A+B)^*(A+B)\leq A^*A+B^*B+2\sqrt{A^*A}\sqrt{B^*B}\\
&\Longleftrightarrow A^*B+B^*A \leq 2\sqrt{A^*A}\sqrt{B^*B}.
\end{align*}

We already know from above that
$\sqrt{A^*A}\sqrt{B^*B}=\sqrt{A^*AB^*B}$. So, to prove the desired
triangle inequality, we are only required to prove
\[A^*B+B^*A \leq 2\sqrt{A^*AB^*B}.\]
But
\[A^*AB^*B=AA^*B^*B=AB^*A^*B=B^*AA^*B.\]

If we set $T=A^*B$, then are done with the proof if we come to show
that the following holds:
\[T+T^*\leq 2\sqrt{T^*T}.\]
But this is just Lemma \ref{Real part smaller than Absolute value
Lemma} once we show that $A^*B$ is hyponormal. This is in effect the
case as $A^*B$ is hyponormal by Lemma \ref{product hyponormal}.

Therefore, under the assumptions of our theorem we have shown that

\[|A+B|^2\leq (|A|+|B|)^2.\]

Hence, by Theorem \ref{Lowner-Heinz inequality theorem}, we have
ended up with
\[|A+B|\leq |A|+|B|,\]
and this is precisely what we wanted to prove.
\end{proof}

\begin{rema}
The foregoing result need not hold if commutativity is dropped even
if $A$ and $B$ are self-adjoint. The reader may check this easily
via the following example:
\[ A=\left(
        \begin{array}{cc}
          -1 & 1 \\
          1 & -1 \\
        \end{array}
      \right)
  \text{ and } B=\left(
                   \begin{array}{cc}
                     2 & 0 \\
                     0 &  0\\
                   \end{array}
                 \right)
  .\]
\end{rema}

\begin{cor}Let $T\in B(H)$ be normal. Then
\[|T|\leq |\REAL T|+|\Ima T|.\]
\end{cor}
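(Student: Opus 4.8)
The plan is to reduce the statement directly to the triangle inequality already proved in Theorem \ref{triangle inequality abs valu THM}. Write $A=\REAL T=\frac{T+T^*}{2}$ and $B=\Ima T=\frac{T-T^*}{2i}$, so that $A$ and $B$ are self-adjoint and $T=A+iB$. The first step is to observe that $A$ and $B$ commute: since $T$ is normal we have $TT^*=T^*T$, and expanding the products $AB$ and $BA$ in terms of $T$ and $T^*$ shows that $AB=BA$ is exactly this identity. Thus the real and imaginary parts of a normal operator are commuting self-adjoint operators.

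Next I would apply Theorem \ref{triangle inequality abs valu THM} to the pair $A$ and $iB$. Here $A$ is normal (being self-adjoint), and $iB$ is normal, hence hyponormal, because $(iB)(iB)^*=B^2=(iB)^*(iB)$; moreover $A(iB)=iAB=iBA=(iB)A$, so the commutativity hypothesis is met. The theorem therefore yields
\[
|A+iB|\leq |A|+|iB|.
\]

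It then remains only to simplify both sides. On the left, $A+iB=T$. On the right, since $B$ is self-adjoint, $(iB)^*(iB)=B^*B=B^2$, so $|iB|=\sqrt{B^2}=|B|$; and by definition $|A|=|\REAL T|$ and $|B|=|\Ima T|$. Combining these gives $|T|\leq |\REAL T|+|\Ima T|$, which is the claim.

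There is essentially no serious obstacle in this argument; the only points that need a moment's care are the verification that normality of $T$ is precisely what delivers the commutativity $AB=BA$ needed to invoke Theorem \ref{triangle inequality abs valu THM}, and the harmless bookkeeping of the factor $i$ — one must pass to $i\,\Ima T$ (rather than $\Ima T$) so that the sum literally reconstructs $T$, while noting that multiplication by $i$ changes neither the modulus nor the normality of a self-adjoint operator.
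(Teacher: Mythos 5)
Your proof is correct and follows essentially the same route as the paper: decompose $T=\REAL T+i\,\Ima T$ into commuting self-adjoint real and imaginary parts (commutativity being equivalent to normality of $T$) and apply Theorem \ref{triangle inequality abs valu THM} to $\REAL T$ and $i\,\Ima T$. Your extra care in checking the commutation and the identity $|i\,\Ima T|=|\Ima T|$ simply fills in details the paper leaves implicit.
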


\begin{proof}
Write $T=\REAL T+i\Ima T$ where $\REAL T$ and $\Ima T$ are commuting
self-adjoint operators. Then apply Theorem \ref{triangle inequality
abs valu THM}.
\end{proof}

We have another simple consequence of Theorem \ref{triangle
inequality abs valu THM}.
\begin{cor}\label{triangle inequality A-B CORO}
Let $A,B\in B(H)$ be such that $AB=BA$. If $A$ is normal and $B$ is
hyponormal, then the following triangle inequality holds:
\[|A-B|\leq |A|+|B|.\]
\end{cor}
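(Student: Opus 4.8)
The plan is to reduce the statement directly to Theorem \ref{triangle inequality abs valu THM} by replacing $B$ with $-B$. The first thing I would check is that the hypotheses of the theorem are preserved under this substitution. Commutativity is immediate: from $AB=BA$ we get $A(-B)=-AB=-BA=(-B)A$. Normality of $A$ is untouched. The only point that needs a one-line verification is that $-B$ is hyponormal whenever $B$ is: since $(-B)(-B)^*=BB^*\leq B^*B=(-B)^*(-B)$, the operator $-B$ is hyponormal as well.

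Next I would record the elementary fact that the absolute value is insensitive to this sign change, namely $|-B|=|B|$. This holds because $(-B)^*(-B)=B^*B$, so $|-B|=\sqrt{(-B)^*(-B)}=\sqrt{B^*B}=|B|$ by uniqueness of the positive square root.

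With these observations in hand, applying Theorem \ref{triangle inequality abs valu THM} to the pair $A$ and $-B$ yields
\[|A-B|=|A+(-B)|\leq |A|+|-B|=|A|+|B|,\]
which is exactly the desired inequality.

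I do not anticipate any genuine obstacle here; the corollary is a formal consequence of the preceding theorem, and the only thing to be careful about is confirming that hyponormality (an asymmetric condition relating $TT^*$ and $T^*T$) survives multiplication by $-1$, which it does trivially since the sign disappears in both $TT^*$ and $T^*T$.
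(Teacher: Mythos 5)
Your proof is correct and follows exactly the paper's route: replace $B$ by $-B$, check that commutativity and hyponormality are preserved (and that $|-B|=|B|$), and apply Theorem \ref{triangle inequality abs valu THM}. Nothing to add.
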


\begin{proof}
Since $AB=BA$, we know that $A(-B)=(-B)A$. Also $-B$ is hyponormal.
Then, apply Theorem \ref{triangle inequality abs valu THM}.
\end{proof}

Theorem \ref{triangle inequality abs valu THM} may be generalized to
a finite sum of operators. Before, recall that the sum of two
commuting normal operators remains normal. This too may be
generalized (the proof by induction is omitted).

\begin{pro}
Let $(A_i)_{i=1,\cdots, n}$ be a family of normal pairwise commuting
elements of $B(H)$. Then $A_1+A_2+\cdots+A_n$ is normal.
\end{pro}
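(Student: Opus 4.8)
The plan is to argue by induction on the number $n$ of summands, taking as base case the fact recalled just before the statement, namely that the sum of two commuting normal operators is again normal (the case $n=1$ being trivial, and $n=2$ being exactly this recalled fact). So suppose the assertion is known for any family of $n-1$ pairwise commuting normal operators, and let $A_1,A_2,\cdots,A_n\in B(H)$ be pairwise commuting and normal.

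First I would isolate the right object: set $S=A_1+A_2+\cdots+A_{n-1}$. By the induction hypothesis $S$ is normal. The only point left to verify before invoking the two-operator case is that $S$ commutes with $A_n$, and this is immediate from linearity since
\[
SA_n=\sum_{i=1}^{n-1}A_iA_n=\sum_{i=1}^{n-1}A_nA_i=A_nS,
\]
using $A_iA_n=A_nA_i$ for each $i\leq n-1$. Thus $S$ and $A_n$ are two commuting normal operators, so $S+A_n=A_1+A_2+\cdots+A_n$ is normal, which closes the induction.

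I do not expect a genuine obstacle here: the argument is entirely routine once $S$ is singled out, and the only nontrivial ingredient sits inside the two-operator base case, where the Fuglede--Putnam theorem (Lemma \ref{Fugelde equivalences lemma}) is used to see that $A^*B=BA^*$ whenever $A,B$ are normal with $AB=BA$; this is precisely what forces the cross terms in $(A+B)^*(A+B)$ and $(A+B)(A+B)^*$ to coincide, while the diagonal terms coincide by the normality of $A$ and of $B$ separately. If one preferred to avoid invoking that step again, one could instead strengthen the induction hypothesis to assert that every partial sum commutes with every remaining factor, but this is unnecessary since, as displayed above, commutativity of $S$ with $A_n$ already falls out of linearity.
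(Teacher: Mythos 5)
Your proof is correct and follows exactly the route the paper intends: the paper omits the argument, remarking only that it is a proof by induction from the recalled two-operator case, and your induction step (normality of the partial sum $S$ plus commutativity of $S$ with $A_n$ by linearity) is precisely that argument.
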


We are ready for the promised generalization of Theorem
\ref{triangle inequality abs valu THM} whose proof is again a proof
by induction.

\begin{cor}
Let $(A_i)_{i=1,\cdots, n}$ be a family of pairwise commuting
elements of $B(H)$. If all $(A_i)_{i=1,\cdots, n}$ are normal
\textbf{except one} which is assumed to be hyponormal, then
\[|A_1+A_2+\cdots+A_n|\leq |A_1|+|A_2|+\cdots+|A_n|.\]
\end{cor}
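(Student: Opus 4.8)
The plan is to prove this by induction on $n$, using Theorem~\ref{triangle inequality abs valu THM} as the base case ($n=2$) and the preceding Proposition (stability of normality under sums of commuting normal operators) at each inductive step. Assume without loss of generality that $A_1,\dots,A_{n-1}$ are normal and $A_n$ is the one hyponormal factor; this is legitimate because the hypothesis is symmetric in the first $n-1$ indices, so we may relabel so that the hyponormal operator comes last.

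For the inductive step, suppose the claim holds for families of size $n-1$, and set $S=A_1+A_2+\cdots+A_{n-1}$. First I would record the structural facts we need: since the $A_i$ pairwise commute, so do $A_1,\dots,A_{n-1}$, hence by the Proposition $S$ is normal; moreover $S$ commutes with $A_n$, because each $A_i$ does, so $SA_n=A_nS$. Thus the pair $(S,A_n)$ satisfies the hypotheses of Theorem~\ref{triangle inequality abs valu THM} — $S$ normal, $A_n$ hyponormal, $SA_n=A_nS$ — and we obtain
\[
|A_1+\cdots+A_{n-1}+A_n|=|S+A_n|\leq |S|+|A_n|.
\]
Next, $A_1,\dots,A_{n-1}$ is itself a family of $n-1$ pairwise commuting operators, all of them normal, so in particular it falls under the inductive hypothesis (one may regard any one of them as the ``exceptional'' hyponormal factor, a normal operator being trivially hyponormal), giving $|S|=|A_1+\cdots+A_{n-1}|\leq |A_1|+\cdots+|A_{n-1}|$. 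Combining the two displayed estimates yields $|A_1+\cdots+A_n|\leq |A_1|+\cdots+|A_{n-1}|+|A_n|$, completing the induction.

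The only point requiring genuine care — hence the main obstacle, mild as it is — is verifying that the reduced family $(S,A_n)$ actually meets \emph{all} the hypotheses of Theorem~\ref{triangle inequality abs valu THM} at each stage: one must check both that $S$ remains normal (this is exactly the content of the cited Proposition, applied to the normal pairwise commuting family $A_1,\dots,A_{n-1}$) and that $S$ still commutes with $A_n$ (immediate from distributing the product over the sum, since $A_iA_n=A_nA_i$ for every $i$). Once these two closure properties are in place, the argument is a routine two-line induction, and no further estimates or applications of the Löwner--Heinz inequality are needed beyond those already internal to Theorem~\ref{triangle inequality abs valu THM}.
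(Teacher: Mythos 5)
Your proposal is correct and follows exactly the route the paper intends (the paper only sketches it as ``a proof by induction''): relabel so the hyponormal operator is last, use the preceding proposition to see that $S=A_1+\cdots+A_{n-1}$ is normal and commutes with $A_n$, apply Theorem \ref{triangle inequality abs valu THM}, and finish with the inductive hypothesis on the all-normal family. No gaps; the justification that a normal operator may play the role of the hyponormal one in the inductive hypothesis is exactly the right point to make explicit.
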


It is known that an inequality of the type $\||A|-|B|\|\leq \|A\pm
B\|$ is not true in general even if $A$ and $B$ are self-adjoint.

\begin{pro}\label{inequality generalzed proposition}Let $A,B\in B(H)$ be such that $AB=BA$. If $A$ and $B$ are normal, then the following inequality holds:
\[\||A|-|B|\|\leq \|A+B\|.\]
\end{pro}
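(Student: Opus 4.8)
The plan is to deduce the stated norm inequality from the single operator inequality
\[(|A|-|B|)^{2}\le (A+B)^{*}(A+B).\]
Granting this, the rest is immediate: both sides are positive operators, so $0\le (|A|-|B|)^{2}\le (A+B)^{*}(A+B)$ forces $\|(|A|-|B|)^{2}\|\le \|(A+B)^{*}(A+B)\|$, and the $C^{*}$-identity (applied to the self-adjoint operator $|A|-|B|$ on the left and to $A+B$ on the right) turns this into $\||A|-|B|\|^{2}\le \|A+B\|^{2}$, which is the claim. Thus the whole proof reduces to establishing the displayed operator inequality.

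First I would expand both sides. Since $A$ is normal and $AB=BA$, Proposition \ref{commutativity operators equiv abs va pro} gives $|A||B|=|B||A|$, so $(|A|-|B|)^{2}=|A|^{2}+|B|^{2}-2|A||B|=A^{*}A+B^{*}B-2|A||B|$, whereas $(A+B)^{*}(A+B)=A^{*}A+B^{*}B+A^{*}B+B^{*}A$. Hence the displayed inequality is equivalent to the lower bound $A^{*}B+B^{*}A\ge -2|A||B|$ for the self-adjoint operator $A^{*}B+B^{*}A$, so it suffices to prove the two-sided estimate $-2|A||B|\le A^{*}B+B^{*}A\le 2|A||B|$.

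This two-sided estimate is the heart of the matter, and is obtained by the same device used in the proof of Theorem \ref{triangle inequality abs valu THM}. Put $T=A^{*}B$. Using Lemma \ref{Fugelde equivalences lemma} (from the normality of $A$ and $AB=BA$ one gets $A^{*}B=BA^{*}$) together with $A^{*}A=AA^{*}$, one checks that $T^{*}T=B^{*}AA^{*}B=B^{*}BA^{*}A=A^{*}AB^{*}B$, the last identity because $A^{*}A$ and $B^{*}B$ commute (a further consequence of $|A||B|=|B||A|$); hence, by Lemma \ref{sqrt A+B Lemma}, $\sqrt{T^{*}T}=\sqrt{A^{*}A}\sqrt{B^{*}B}=|A||B|$. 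Since $T=A^{*}B$ is hyponormal by Lemma \ref{product hyponormal}, Lemma \ref{Real part smaller than Absolute value Lemma} yields $T+T^{*}\le 2\sqrt{T^{*}T}=2|A||B|$, i.e. $A^{*}B+B^{*}A\le 2|A||B|$; running the same argument with $B$ replaced by $-B$ (still normal, still commuting with $A$, with $|-B|=|B|$) gives the companion inequality $-(A^{*}B+B^{*}A)\le 2|A||B|$. Feeding $A^{*}B+B^{*}A\ge -2|A||B|$ back into the expansions of the previous paragraph gives $(|A|-|B|)^{2}\le (A+B)^{*}(A+B)$, and the conclusion follows as in the first paragraph. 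The only place that demands care is the Fuglede--Putnam shuffling identifying $\sqrt{T^{*}T}$ with $|A||B|$ (together with the harmless sign change $B\mapsto -B$); everything else is routine. I would also remark that the normality of $B$ is never really used, only its hyponormality, so the same argument actually yields $\||A|-|B|\|\le\|A+B\|$ whenever $A$ is normal, $B$ is hyponormal and $AB=BA$.
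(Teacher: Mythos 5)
Your proof is correct, but it takes a genuinely different route from the one the paper uses for this proposition. The paper deduces the result from the already-proved triangle inequality (Corollary \ref{triangle inequality A-B CORO}): writing $A=(A+B)-B$ and $B=(A+B)-A$, and using that $A+B$ is \emph{normal} (here is where the normality of both $A$ and $B$ enters) and commutes with $A$ and with $B$, it gets the two-sided bound $-|A+B|\leq |A|-|B|\leq |A+B|$ and then applies Lemma \ref{-B < A< B norm A leq norm B lemma B>0}. You instead prove the single operator inequality $(|A|-|B|)^{2}\leq (A+B)^{*}(A+B)$ directly: after expanding with $|A||B|=|B||A|$ (Proposition \ref{commutativity operators equiv abs va pro}), the needed bound $A^{*}B+B^{*}A\geq -2|A||B|$ follows from Lemma \ref{Real part smaller than Absolute value Lemma} applied to the hyponormal operator $A^{*}(-B)$ (Lemma \ref{product hyponormal}), together with the Fuglede computation $T^{*}T=B^{*}AA^{*}B=A^{*}AB^{*}B$ and $\sqrt{A^{*}AB^{*}B}=|A||B|$; taking norms of positive operators and using the $C^{*}$-identity finishes the argument. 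All these steps check out. In effect you have reproduced, ahead of time, the mechanism of the paper's later Proposition \ref{inequality generalzed proposition -} and Corollary \ref{inequality generalzed CORO} (your displayed inequality is exactly the square of $||A|-|B||\leq |A+B|$), and your closing observation that only hyponormality of $B$ is needed is precisely the paper's subsequent ``improvement'' corollary $\||A|-|B|\|\leq\|A\pm B\|$. What the paper's shorter argument buys is economy (it recycles the triangle inequality and needs no new computation); what yours buys is the stronger operator-level conclusion and the weaker hypothesis on $B$, at the cost of redoing the $A^{*}B$ analysis that the paper postpones to the next proposition.
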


Probably the following lemma has been noted elsewhere but we state
 it here anyway with a proof.

\begin{lem}\label{-B < A< B norm A leq norm B lemma B>0}
Let $S,T\in B(H)$ be self-adjoint where $S\geq 0$. If $-S\leq T\leq
S$, then $\|T\|\leq \|S\|$.
\end{lem}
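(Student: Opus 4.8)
The plan is to pass from the operator inequalities to scalar inequalities of quadratic forms and then invoke the variational characterization of the norm of a self-adjoint operator. The only tool needed is the elementary fact that for any self-adjoint $R\in B(H)$ one has $\|R\|=\sup_{\|x\|=1}|\langle Rx,x\rangle|$; this requires no spectral theorem and so stays in keeping with the elementary spirit of the paper.

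First I would unpack the hypothesis. By definition, $-S\leq T\leq S$ means precisely that both $S-T\geq 0$ and $S+T\geq 0$. Testing each of these positive operators against an arbitrary $x\in H$ yields
\[
\langle Tx,x\rangle\leq \langle Sx,x\rangle \quad\text{and}\quad -\langle Sx,x\rangle\leq \langle Tx,x\rangle,
\]
which combine into the single pointwise estimate
\[
|\langle Tx,x\rangle|\leq \langle Sx,x\rangle \qquad (x\in H).
\]

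Next I would take the supremum over the unit sphere. Since $T$ is self-adjoint, $\|T\|=\sup_{\|x\|=1}|\langle Tx,x\rangle|$, so the displayed estimate gives $\|T\|\leq \sup_{\|x\|=1}\langle Sx,x\rangle$. Because $S\geq 0$, the numbers $\langle Sx,x\rangle$ are nonnegative, so the same variational formula applied to the self-adjoint $S$ reads $\|S\|=\sup_{\|x\|=1}|\langle Sx,x\rangle|=\sup_{\|x\|=1}\langle Sx,x\rangle$. Chaining these two facts delivers $\|T\|\leq \|S\|$, as desired.

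There is no serious obstacle here: the argument is a short computation with quadratic forms. The one point that deserves care is the legitimate use of the identity $\|R\|=\sup_{\|x\|=1}|\langle Rx,x\rangle|$, valid exactly because $T$ and $S$ are self-adjoint (it fails for general operators); this is precisely why the self-adjointness assumption, and not merely the order inequalities, is essential.
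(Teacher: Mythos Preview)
Your proof is correct and is essentially the same as the paper's: both pass from the operator inequalities to $|\langle Tx,x\rangle|\leq \langle Sx,x\rangle$ and then invoke $\|R\|=\sup_{\|x\|=1}|\langle Rx,x\rangle|$ for self-adjoint $R$ to conclude. The paper's version is more terse, but the argument is identical.
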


\begin{proof}By assumption, for all $x\in H$
\[-<Sx,x>\leq <Tx,x>\leq <Sx,x> \text{ or merely }|<Tx,x>|\leq <Sx,x>.\]
Therefore,
\[\|T\|=\sup_{\|x\|=1}|<Tx,x>|\leq \sup_{\|x\|=1}<Sx,x>=\|S\|,\]
as desired.

\end{proof}

Let us prove Proposition \ref{inequality generalzed proposition}.

\begin{proof}Since $A$ and $B$ are commuting normal operators, we
know that $A+B$ too is normal. Since $A+B$ commutes with $B$, by
Corollary \ref{triangle inequality A-B CORO} we have
\[|A|=|A+B-B|\leq |A+B|+|B|\Longrightarrow |A|-|B|\leq |A+B|.\]
Similarly, as $A+B$ commutes with $A$, we get
\[|B|-|A|\leq |A+B|.\]
Whence
\[-|A+B|\leq|A|-|B|\leq |A+B|.\]
By Lemma \ref{-B < A< B norm A leq norm B lemma B>0} (and
remembering that $\|T\|=\|~|T|~\|$  for $T\in B(H)$), we obtain
\[\||A|-|B|\|\leq \|~|A+B|~\|=\|A+B\|,\]
as required.
\end{proof}

\begin{rema}
In the previous proposition, if $B$ is only hyponormal, then at the
moment we are only sure that:
\[|B|-|A|\leq |A+B|\]
because we can only prove that $A+B$ is hyponormal. We will remedy
this little problem shortly.
\end{rema}

\begin{cor}
Let $A,B\in B(H)$ be such that $AB=BA$. If $A$ and $B$ are normal,
then the following inequality holds:
\[\||A|-|B|\|\leq \|A-B\|.\]
\end{cor}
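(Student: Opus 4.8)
The plan is to derive this as an immediate corollary of Proposition \ref{inequality generalzed proposition} by a sign substitution, exactly as Corollary \ref{triangle inequality A-B CORO} was derived from Theorem \ref{triangle inequality abs valu THM}. The idea is to replace $B$ by $-B$ and check that all the hypotheses of the proposition survive this replacement.

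First I would record the three elementary facts needed: (i) since $AB=BA$, we also have $A(-B)=(-B)A$, so $A$ and $-B$ still commute; (ii) if $B$ is normal then $-B$ is normal, since $(-B)(-B)^*=BB^*=B^*B=(-B)^*(-B)$; and (iii) the absolute value is insensitive to sign, $|-B|=\sqrt{(-B)^*(-B)}=\sqrt{B^*B}=|B|$. With these in hand, applying Proposition \ref{inequality generalzed proposition} to the pair $A$ and $-B$ yields
\[\||A|-|-B|\|\leq \|A+(-B)\|,\]
which, after substituting $|-B|=|B|$ on the left and simplifying $A+(-B)=A-B$ on the right, is precisely
\[\||A|-|B|\|\leq \|A-B\|,\]
as required.

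There is no real obstacle here; the only thing to be careful about is that we genuinely need \emph{both} $A$ and $B$ normal (not merely $A$ normal, $B$ hyponormal), because Proposition \ref{inequality generalzed proposition} itself requires both operators normal — and indeed the Remark immediately preceding this corollary flags exactly why the hyponormal case is more delicate. So the proof is a two- or three-line reduction, and the whole content lives in the already-established proposition.
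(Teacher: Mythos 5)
Your proof is correct and matches the intended route: the paper states this corollary without proof precisely because it follows from Proposition \ref{inequality generalzed proposition} by replacing $B$ with $-B$, using that $-B$ is still normal, still commutes with $A$, and satisfies $|-B|=|B|$, exactly as you argue. No gap to report.
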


Proposition \ref{inequality generalzed proposition} can be improved
as it is a particular case of the following remarkable result:

\begin{pro}\label{inequality generalzed proposition -}Let $A,B\in B(H)$ be such that $AB=BA$. If $A$ is normal and $B$ is
hyponormal, then the following inequality holds:
\[||A|-|B||\leq |A-B|.\]
\end{pro}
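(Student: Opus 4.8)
The plan is to establish the operator inequality $||A|-|B||^2 \le |A-B|^2$ and then apply the L\"{o}wner-Heinz inequality (Theorem \ref{Lowner-Heinz inequality theorem}) with $\alpha = \tfrac12$. Unwinding the left-hand side, $||A|-|B||^2 = (|A|-|B|)^2 = |A|^2 + |B|^2 - |A||B| - |B||A| = A^*A + B^*B - 2|A||B|$, using Proposition \ref{commutativity operators equiv abs va pro} (which applies since $A$ is normal and $AB=BA$, giving $|A||B|=|B||A|$). On the other side, $|A-B|^2 = (A-B)^*(A-B) = A^*A + B^*B - A^*B - B^*A$. So the desired inequality reduces to showing
\[
-2|A||B| \le -A^*B - B^*A, \quad\text{i.e.}\quad A^*B + B^*A \le 2|A||B|.
\]

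This is exactly the inequality that appeared inside the proof of Theorem \ref{triangle inequality abs valu THM}. First I would recall from there that, since $A$ is normal and commutes with $B$, Fuglede-Putnam gives $A^*B = B^*A$ wait --- more carefully, the earlier proof showed $A^*AB^*B = B^*AA^*B$ and hence $\sqrt{A^*A}\sqrt{B^*B} = \sqrt{A^*AB^*B}$. Setting $T = A^*B$, one computes $T^*T = B^*AA^*B = A^*AB^*B = |A|^2|B|^2$, so $\sqrt{T^*T} = |A||B|$ (using Lemma \ref{sqrt A+B Lemma} on the commuting positive operators $|A|^2, |B|^2$). Meanwhile $T + T^* = A^*B + B^*A$. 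Thus the target inequality is precisely
\[
T + T^* \le 2\sqrt{T^*T},
\]
which is Lemma \ref{Real part smaller than Absolute value Lemma} provided $T = A^*B$ is hyponormal --- and that is guaranteed by Lemma \ref{product hyponormal}, since $A$ is normal, $B$ is hyponormal, and $AB = BA$.

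Assembling the pieces: I would write out $|A-B|^2 - ||A|-|B||^2 = 2|A||B| - A^*B - B^*A = 2\sqrt{T^*T} - (T+T^*) \ge 0$, conclude $||A|-|B||^2 \le |A-B|^2$, and then invoke Theorem \ref{Lowner-Heinz inequality theorem} with $\alpha=\tfrac12$ to extract $||A|-|B|| \le |A-B|$. I do not anticipate a genuine obstacle here, since every ingredient has already been proved in the excerpt; the only point requiring a little care is the bookkeeping that turns $||A|-|B||^2$ and $|A-B|^2$ into a statement about $\REAL(A^*B)$ versus $|A^*B|$, which relies essentially on the commutativity-of-absolute-values fact (Proposition \ref{commutativity operators equiv abs va pro}) to make $(|A|-|B|)^2$ expand cleanly. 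Once that reduction is in place, the result is immediate from Lemmas \ref{product hyponormal} and \ref{Real part smaller than Absolute value Lemma}.
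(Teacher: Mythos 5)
Your proof is correct and follows essentially the same route as the paper: expand $||A|-|B||^2$ and $|A-B|^2$ using $|A||B|=|B||A|$, reduce to $A^*B+B^*A\leq 2\sqrt{B^*AA^*B}$, and apply Lemma \ref{Real part smaller than Absolute value Lemma} to the hyponormal operator $T=A^*B$ (Lemma \ref{product hyponormal}), finishing with L\"{o}wner--Heinz. Your extra bookkeeping identifying $\sqrt{T^*T}=|A||B|$ via Lemma \ref{sqrt A+B Lemma} is just a slightly more explicit version of the paper's step $\sqrt{A^*A}\sqrt{B^*B}=\sqrt{A^*AB^*B}$.
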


\begin{proof}
We easily see as $|A||B|=|B||A|$ that
\begin{align*}||A|-|B||^2\leq |A-B|^2&\Longleftrightarrow |A|^2+|B|^2-2|A||B|\leq |A|^2+|B|^2-A^*B-B^*A\\
&\Longleftrightarrow A^*B+B^*A \leq 2\sqrt{A^*A}\sqrt{B^*B}\\
&\Longleftrightarrow A^*B+B^*A \leq 2\sqrt{B^*AA^*B}.
\end{align*}

But, this is always true in virtue of Lemma \ref{Real part smaller
than Absolute value Lemma} as $A^*B$ is hyponormal. Therefore, we
have shown
\[||A|-|B||^2\leq |A-B|^2.\]
A glance at Theorem \ref{Lowner-Heinz inequality theorem} finally
gives
\[||A|-|B||\leq |A-B|.\]
\end{proof}

\begin{cor}\label{inequality generalzed CORO}
Let $A,B\in B(H)$ be such that $AB=BA$. If $A$ is normal and $B$ is
hyponormal, then the following inequality holds:
\[||A|-|B||\leq |A+B|.\]
\end{cor}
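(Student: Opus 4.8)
The plan is to reduce this immediately to Proposition \ref{inequality generalzed proposition -} by the same sign-flip trick already used to pass from Theorem \ref{triangle inequality abs valu THM} to Corollary \ref{triangle inequality A-B CORO}. The point is that hyponormality and the commutation relation are both insensitive to replacing $B$ with $-B$, while the absolute value is as well.

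Concretely, first I would observe that $-B$ is hyponormal, since $(-B)(-B)^*=BB^*\leq B^*B=(-B)^*(-B)$, and that $A(-B)=(-B)A$ follows at once from $AB=BA$. Thus the pair $A$, $-B$ satisfies the hypotheses of Proposition \ref{inequality generalzed proposition -} ($A$ normal, $-B$ hyponormal, commuting), which yields
\[
||A|-|{-B}||\leq |A-(-B)|=|A+B|.
\]
Second, I would note that $|{-B}|=\sqrt{(-B)^*(-B)}=\sqrt{B^*B}=|B|$, so the left-hand side is exactly $||A|-|B||$, and the corollary follows.

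There is essentially no obstacle here: the whole content has already been established in Proposition \ref{inequality generalzed proposition -}, and the only thing to check is the trivial invariance of the three hypotheses (and of $|\cdot|$) under $B\mapsto -B$. The proof is therefore a two-line deduction.

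\begin{proof}
Since $AB=BA$, we have $A(-B)=(-B)A$. Also, $-B$ is hyponormal because $(-B)(-B)^*=BB^*\leq B^*B=(-B)^*(-B)$. Hence, applying Proposition \ref{inequality generalzed proposition -} to $A$ and $-B$, we obtain
\[
||A|-|{-B}||\leq |A-(-B)|=|A+B|.
\]
Since $|{-B}|=\sqrt{(-B)^*(-B)}=\sqrt{B^*B}=|B|$, this reads
\[
||A|-|B||\leq |A+B|,
\]
as required.
\end{proof}
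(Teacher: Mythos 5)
Your proof is correct and follows exactly the paper's own route: the paper also deduces the corollary from Proposition \ref{inequality generalzed proposition -} by replacing $B$ with $-B$, noting that $-B$ is still hyponormal. You merely spell out the trivial checks ($A(-B)=(-B)A$ and $|-B|=|B|$) that the paper leaves implicit.
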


\begin{proof}
Since $B$ is hyponormal, so is $-B$. The rest is obvious.
\end{proof}

Here is the improvement of Proposition \ref{inequality generalzed
proposition}:

\begin{cor}
Let $A,B\in B(H)$ be such that $AB=BA$. If $A$ is normal and $B$ is
hyponormal, then the following inequality holds:
\[\||A|-|B|\|\leq \|A\pm B\|.\]
\end{cor}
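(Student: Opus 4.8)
The plan is to deduce this corollary almost immediately from Corollary~\ref{inequality generalzed CORO} and Proposition~\ref{inequality generalzed proposition -}, since together they give $\||A|-|B|\|\leq \|~|A\pm B|~\|$, and then invoke the standing identity $\|T\|=\|~|T|~\|$. Concretely, from Proposition~\ref{inequality generalzed proposition -} we have $||A|-|B||\leq |A-B|$ and from Corollary~\ref{inequality generalzed CORO} we have $||A|-|B||\leq |A+B|$; both are operator inequalities between positive operators, so taking norms (and using that $S\leq T$ with $S,T\geq 0$ forces $\|S\|\leq\|T\|$) yields $\||A|-|B|\|\leq \|~|A\pm B|~\|=\|A\pm B\|$.

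First I would recall why passing to norms is legitimate: if $0\leq S\leq T$ then $\|S\|\leq\|T\|$, which is the self-adjoint case of Lemma~\ref{-B < A< B norm A leq norm B lemma B>0} (take the $S$ there to be our $T$, and note $-T\leq S\leq T$ whenever $0\leq S\leq T$). Applying this with $S=||A|-|B||^2$ or, more directly, with the positive operators $||A|-|B||$ and $|A\pm B|$ themselves, converts the two operator inequalities into the numerical inequalities $\||A|-|B|\|\leq\|~|A-B|~\|$ and $\||A|-|B|\|\leq\|~|A+B|~\|$.

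Then I would finish by writing $\|~|A\pm B|~\|=\|A\pm B\|$, which holds for every bounded operator (as noted in the introduction), so that both estimates combine into the single statement $\||A|-|B|\|\leq\|A\pm B\|$ for both choices of sign. Since $A$ is normal and $B$ is hyponormal with $AB=BA$, the hypotheses of Proposition~\ref{inequality generalzed proposition -} and Corollary~\ref{inequality generalzed CORO} are exactly met, so nothing further is needed.

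Honestly there is no real obstacle here: this corollary is purely a repackaging of the two preceding results via the monotonicity of the norm on positive operators and the identity $\|T\|=\|~|T|~\|$. The only thing to be careful about is not to claim more than what was proved — in particular the hyponormal-$B$ version of Proposition~\ref{inequality generalzed proposition} was left incomplete earlier precisely because only $|B|-|A|\leq|A+B|$ was available at that stage, whereas now Proposition~\ref{inequality generalzed proposition -} supplies the full two-sided operator bound $||A|-|B||\leq|A\pm B|$, which is what makes the norm estimate go through for hyponormal $B$.
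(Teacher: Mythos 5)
Your proposal is correct and follows essentially the same route as the paper: combine the operator inequalities $||A|-|B||\leq |A-B|$ and $||A|-|B||\leq |A+B|$ (from Proposition~\ref{inequality generalzed proposition -} and Corollary~\ref{inequality generalzed CORO}) and then pass to norms via Lemma~\ref{-B < A< B norm A leq norm B lemma B>0} together with $\|T\|=\|~|T|~\|$. Your citation of Proposition~\ref{inequality generalzed proposition -} for the minus-sign case is in fact the intended reading of the paper's own proof, so there is nothing to add.
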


\begin{proof}By Proposition \ref{inequality generalzed proposition}
and Corollary \ref{inequality generalzed CORO}, we know that
\[||A|-|B||\leq |A\pm B|.\]
Then, calling on Lemma \ref{-B < A< B norm A leq norm B lemma B>0}
yields
\[\||A|-|B|\|=\|~||A|-|B||~\|\leq \|~|A\pm B|~\|=\|A\pm B\|.\]
\end{proof}

If we want to drop commutativity in Theorem \ref{triangle inequality
abs valu THM}, then this is at the cost of adding an extra
condition. Also, we only have to assume that one of the two
operators is normal.

\begin{thm}
Let $A,B\in B(H)$ be such that $AB=BA$. If $A$ is normal and
$A^*B+B^*A\leq 0$, then
\[|A+B|\leq |A|+|B|.\]
\end{thm}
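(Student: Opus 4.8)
The plan is to imitate the square-comparison argument from the proof of Theorem~\ref{triangle inequality abs valu THM}, but to use the sign hypothesis $A^*B+B^*A\leq 0$ in place of hyponormality of $B$. First I would expand the left-hand square:
\[|A+B|^2=(A+B)^*(A+B)=A^*A+B^*B+(A^*B+B^*A).\]
Since by assumption $A^*B+B^*A\leq 0$, this immediately yields
\[|A+B|^2\leq A^*A+B^*B=|A|^2+|B|^2.\]

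Next I would invoke the remaining hypotheses to control the right-hand square. As $A$ is normal and $AB=BA$, Proposition~\ref{commutativity operators equiv abs va pro} gives $|A||B|=|B||A|$; since $|A|$ and $|B|$ are positive and commute, the first item of Lemma~\ref{sqrt A+B Lemma} shows $|A||B|\geq 0$. Consequently
\[(|A|+|B|)^2=|A|^2+|B|^2+|A||B|+|B||A|=|A|^2+|B|^2+2|A||B|\geq |A|^2+|B|^2.\]
Chaining this with the previous display gives $|A+B|^2\leq(|A|+|B|)^2$, and a single application of the L\"owner--Heinz inequality (Theorem~\ref{Lowner-Heinz inequality theorem} with $\alpha=\frac12$) delivers $|A+B|\leq|A|+|B|$.

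There is no genuine obstacle here; the only real content is that the commutativity and normality assumptions force $|A|$ and $|B|$ to commute, hence $|A||B|\geq 0$, which is exactly what Proposition~\ref{commutativity operators equiv abs va pro} and Lemma~\ref{sqrt A+B Lemma} provide. It is worth stressing the trade-off with Theorem~\ref{triangle inequality abs valu THM}: the hyponormality of $B$ is never used, having been replaced by the condition $A^*B+B^*A\leq 0$, which makes the ``cross-term'' estimate $A^*B+B^*A\leq 2\sqrt{A^*A}\sqrt{B^*B}$ completely trivial, the right-hand side being positive and the left-hand side nonpositive.
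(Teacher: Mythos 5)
Your proof is correct, but it arranges the final comparison a little differently from the paper. The paper uses the hypothesis in the same way to get $|A+B|^2\leq A^*A+B^*B$, then applies the L\"{o}wner--Heinz inequality immediately to obtain $|A+B|\leq\sqrt{|A|^2+|B|^2}$, and finishes with the \emph{third} item of Lemma \ref{sqrt A+B Lemma} (subadditivity of the square root for commuting positive operators, available because Proposition \ref{commutativity operators equiv abs va pro} gives $|A|^2|B|^2=|B|^2|A|^2$) to bound $\sqrt{|A|^2+|B|^2}$ by $|A|+|B|$. You instead stay at the level of squares: from $|A||B|=|B||A|$ and the \emph{first} item of Lemma \ref{sqrt A+B Lemma} you get $|A||B|\geq 0$, hence $(|A|+|B|)^2\geq|A|^2+|B|^2\geq|A+B|^2$, and then you invoke L\"{o}wner--Heinz once at the very end. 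Both routes rest on the same three ingredients (the sign hypothesis, Proposition \ref{commutativity operators equiv abs va pro}, and Lemma \ref{sqrt A+B Lemma}); yours has the mild advantage of using operator monotonicity of the square root only once and of matching the square-comparison pattern of Theorem \ref{triangle inequality abs valu THM}, while the paper's version skips the observation $|A||B|\geq 0$ and leans directly on the stated $\sqrt{S+T}\leq\sqrt S+\sqrt T$ estimate. Your closing remark that hyponormality of $B$ plays no role here is also accurate.
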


\begin{proof}
Clearly,
\[(A+B)^*(A+B)=A^*A+A^*B+B^*A+B^*B.\]
As $A^*B+B^*A\leq 0$, then
\[A^*A+A^*B+B^*A+B^*B\leq A^*A+B^*B.\]
By Theorem \ref{Lowner-Heinz inequality theorem}, we have
\[|A+B|=\sqrt{A^*A+A^*B+B^*A+B^*B} \leq \sqrt{A^*A+B^*B}.\]
Since $AB=BA$ and $A$ is normal, Proposition \ref{commutativity
operators equiv abs va pro} implies that $|A||B|=|B||A|$ or
$|A|^2|B|^2=|B|^2|A|^2$. Finally, Lemma \ref{sqrt A+B Lemma} does
the remaining job, i.e. it gives us
\[|A+B|\leq |A|+|B|\]
and this completes the proof.
\end{proof}

\section*{acknowledgement} The author wishes to thank Mr. S. Dehimi
for a discussion which led to a slight improvement of the result of
Theorem \ref{triangle inequality abs valu THM}.

\end{document}